\newtheorem{theorem}{Theorem}[subsection]
\newtheorem{lemma}{Lemma}[subsection]
\newtheorem{proposition}{Proposition}[subsection]
\newtheorem{corollary}{Corollary}[subsection]
\theoremstyle{definition}
\newtheorem{definition}{Definition}[subsection]
\numberwithin{equation}{section}
\newcommand{\supp}{\mathrm{supp}\,}
\newcommand{\Autz}{\mathrm{Aut^0}\!}
\newcommand{\Hom}{\mathrm{Hom}}
\newcommand{\Gr}{\mathrm{Gr}}
\newcommand{\Bl}{\mathrm{Bl}}
\newcommand{\Diag}{\mathrm{diag}}
\newcommand{\C}{\mathbb C}
\newcommand{\Z}{\mathbb Z}
\newcommand{\p}{\mathbb P}
\newcommand{\A}{\mathbb A}
\begin{document}

\title{Automorphisms of wonderful varieties}

\author{Guido Pezzini}
\address{Departement Mathematik \\
Universit\"at Basel\\
Rheinsprung 21 \\ 
4051 Basel\\
Switzerland}
\email{Guido.Pezzini@unibas.ch}

\subjclass[2000]{14J50, 14L30, 14M17}
\date{June 31, 2008}

\begin{abstract}
Let $G$ be a complex semisimple linear algebraic group, and $X$ a wonderful $G$-variety. We determine the connected automorphism group $\Autz(X)$ and we calculate Luna's invariants of $X$ under its action.
\end{abstract}

\maketitle

\section{Introduction}
Let $X$ be a smooth complete algebraic variety over $\C$, and let $\Autz(X)$ be the connected component containing the identity of its automorphism group. It is well known that $\Autz(X)$ is an algebraic group, with Lie algebra equal to the global sections of the tangent bundle of $X$.

Let us consider this group in the framework of varieties endowed with an action of a connected reductive algebraic group $G$. The simplest case occurs when the action of $G$ on $X$ is transitive, i.e.\ we have a projective homogeneous space $X=G/P$ for $P$ a parabolic subgroup of $G$. In this setting $\Autz(X)$ is known, and it is always semisimple; it is interesting to notice that in a few cases it strictly contains the image of $G$ under the given homomorphism $G\to \Autz(X)$.

As soon as we abandon the transitivity hypothesis the situation gets much more complicated, even if we stick to quasi-homogeneous varieties, which means $X$ having an open dense $G$-orbit. For example, in general $\Autz(X)$ need not be reductive. The case of {\em toric varieties} is known since the work \cite{De70} by Demazure: here $\Autz(X)$ is completely determined using the fan of convex cones representing the toric variety $X$. The case of {\em regular varieties} is studied by Bien and Brion in \cite{BB96}; one of their results is the structure of the Lie algebra of $\Autz(X)$ as a $G$-module.

More recently, Brion has considered the case of {\em wonderful varieties}, which are regular varieties with some extra hypotheses. In \cite{Br07} he shows that $\Autz(X)$ is always semisimple in this case, that $X$ is wonderful under its action too, and describes some aspects of the action of $\Autz(X)$ on $X$. These results have also useful consequences on the broader class of {\em spherical varieties}.

In this article we determine the automorphism groups $\Autz(X)$ for any wonderful variety $X$. In particular we show that ``most often'' the image of $G$ is equal to the whole $\Autz(X)$, especially if the number of $G$-orbits on $X$ is greater than $2$. Our approach to describe these varieties uses their discrete invariants introduced by Luna in \cite{Lu01}: they separate wonderful varieties as shown by Losev in \cite{Lo07}.

We describe in details all varieties such that $\Autz(X)$ strictly contains the image of $G$, and we determine Luna's invariants of $X$ under the action of $\Autz(X)$.


\section{Invariants of wonderful varieties}\label{sect:def}
\subsection{Definitions}
Throughout this paper, $G$ will be a semisimple connected linear algebraic group over $\C$. In $G$ we fix a Borel subgroup $B$, a maximal torus $T\subset B$, and we denote 
by $S$ the corresponding set of simple roots. For simple groups, we will refer to the usual Bourbaki numbering of simple roots.

We will often denote by $L$ a Levi subgroup of some parabolic subgroup $P\supseteq B$ of $G$. The choice of $L$ is always supposed to be such that $B\cap L$ and $T\cap L$ are resp. a Borel subgroup and a maximal torus of $L$: in this way the simple roots of $L$ with respect to $B\cap L$ and $T\cap L$ are naturally identified with a subset of $S$.

More generally, whenever we have two reductive groups $\tilde G\supset G$, the choices of Borel subgroups $\tilde B$, $B$ and maximal toruses $\tilde T$, $T$ will always be such that $\tilde B\supset B$ and $\tilde T\supset T$. 

\begin{definition} \cite{Lu01}
A {\em wonderful $G$-variety} is an irreducible algebraic variety $X$ over $\C$ such that:
\begin{enumerate}
\item X is smooth and projective;
\item $G$ has an open (dense) orbit on $X$, and the complement is the union of ($G$-stable) prime divisors $X^{(1)},\ldots,X^{(r)}$ which are smooth, with normal crossings, and satisfy $\bigcap_{i=1}^r X^{(i)} \neq \emptyset$;
\item if $x,y\in X$ are such that $\left\{ i \;|\; x\in X^{(i)}\right\} = \left\{ j \;|\; y\in X^{(j)}\right\}$, then $x$ and $y$ lie on the same $G$-orbit.
\end{enumerate}
We will also use the notation $(G,X)$ instead of $X$ only, to keep track of the group $G$ we are considering. The number $r$ of $G$-stable prime divisors is the {\em rank} of $(G,X)$, and the divisors $X^{(i)}$ are called {\em boundary prime divisors}. Their union is denoted by $\partial (G,X)$, the {\em boundary} of $(G,X)$.
\end{definition}
Wonderful varieties include complete $G$-homogeneous spaces $G/P$, for $P$ a parabolic subgroup, as the special case where the rank is zero. A wonderful variety $X$ is always spherical, i.e.~a Borel subgroup has an open dense orbit on $X$: see \cite{Lu96}. The theory developed by Luna in \cite{Lu01} defines the following ``discrete invariants'':
\begin{definition}
Let $X$ be a wonderful $G$-variety. We define $\Xi(G,X)$ to be the lattice of $B$-eigenvalues ($B$-{\em weights}) of rational functions on $X$ that are $B$-eigenvectors. This lattice has a basis $\Sigma(G,X)$, whose elements are called the {\em spherical roots} of $X$, defined as the set of weights of $T$ acting on the quotient tangent space:
\[
\frac{T_zX}{T_z (G\cdot z)}
\]
where $z$ is the unique point of $X$ fixed by the Borel subgroup opposite of $B$ with respect to $T$. We define $\Delta(G,X)$ to be the set of $B$-stable but not $G$-stable prime divisors on $X$, called {\em colors}. It is a finite set, and it is equipped with a map:
\[
\rho_X\colon \Delta(G,X) \to \Hom_\Z(\Xi(G,X),\Z)
\]
defined as follows: if $D$ is a color then $\rho_{G,X}(D)$ is a functional on $\Xi(G,X)$ taking on $\gamma$ the value $\nu_D(f_\gamma)$, where $\nu_D$ is the discrete valuation on $\C(X)^*$ associated to $D$, and $f_\gamma\in\C(X)^*$ is a $B$-eigenvector whose $B$-eigenvalue is $\gamma$. It is also common to write this as a coupling: $\langle D,\gamma\rangle=\rho_{G,X}(D)(\gamma)$. Finally, we define $S^p(G,X)$ to be the set of simple roots associated to the parabolic subgroup $P(G,X)\supseteq B$ defined as the stabilizer of the open $B$-orbit of $X$.
\end{definition}
It is immediate from the definitions that any intersection of $n$ boundary prime divisors of $X$ is again a wonderful variety, of rank $r-n$ (for any $n=0,\ldots,r$). Moreover, each spherical root $\gamma_i$ can be naturally associated to a boundary prime divisor $X^{(i)}$ on $X$, namely the one such that the $T$-weights of the quotient:
\[
\frac{T_zX^{(i)}}{T_z (G\cdot z)}
\]
are precisely $\Sigma(G,X)\setminus\{\gamma_i\}$. A spherical root $\gamma_i$ can also be associated to a rank $1$ wonderful $G$-subvariety on $X$, namely:
\[
X_{(i)} = \bigcap_{j\neq i} X^{(j)}.
\]
This $X_{(i)}$ has $\gamma_i$ as its unique spherical root.

Finally, $\gamma_i$ is always a linear combination of simple roots of $G$ with non-negative coefficients; the set of simple roots whose coefficient is non-zero is called the {\em support} of $\gamma_i$. The {\em support} of a set of spherical roots is defined as the union of the supports of its elements.

It is worth noticing that these invariants obey some rather strict conditions of combinatorial nature, as discussed by Luna in \cite{Lu01}. In the rest of this paper we will sometimes use these conditions, although it will not be necessary to recall all the combinatorics that arises from the theory.

The last ingredient we need here is the following relation between colors and simple roots:
\begin{definition}
Let $X$ be a wonderful $G$-variety, $D$ one of its colors and $\alpha$ a simple root of $G$. We say that $\alpha$ {\em moves} $D$ if $D$ is not stable under the action of the minimal parabolic subgroup containing $B$ and associated to the simple root $\alpha$.
\end{definition}
With this definition, $S^p(G,X)$ is precisely the set of simple roots moving no color.
\begin{lemma}\label{lemma:moved} \cite{Lu01}
Let $\alpha$ be a simple root moving a color $D$. Suppose that $\alpha$ is not contained in the support of any spherical root. Then $D$ is moved only by $\alpha$, and $\alpha$ moves only $D$; moreover, the functional $\rho_{G,X}(D)$ is equal to $\alpha^\vee|_{\Xi(G,X)}$, where $\alpha^\vee$ is the coroot associated to $\alpha$.
\end{lemma}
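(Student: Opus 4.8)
The plan is to reduce everything to the minimal parabolic $P_\alpha = B \cup B s_\alpha B$ associated with $\alpha$ and to the structure of spherical varieties whose acting group has semisimple rank one.

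First, the assertions about which colors $\alpha$ moves. Let $X_0$ be the open $B$-orbit and set $\Omega := P_\alpha X_0 = P_\alpha\cdot x$ for $x\in X_0$, which is the open $P_\alpha$-orbit of $X$. Since $\alpha$ moves $D$, the prime divisor $D$ is not $P_\alpha$-stable; as $P_\alpha/B\cong\p^1$ it follows that $P_\alpha D = X$ and hence that $D\cap\Omega$ is dense in $D$, whereas every boundary divisor and every color not moved by $\alpha$ is $P_\alpha$-stable. Consequently the colors of $X$ moved by $\alpha$ are exactly the $B$-stable prime divisors of the spherical $P_\alpha$-homogeneous space $\Omega$, via $D'\mapsto D'\cap\Omega$, compatibly with the valuations $\nu_{(-)}$ and with the (unchanged) lattice $\Xi$. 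Factoring out $R_u(P_\alpha)$ and passing to the semisimple part of a Levi of $P_\alpha$ --- a group isomorphic to $\mathrm{SL}_2$ or $\mathrm{PGL}_2$ --- and using the short list of spherical subgroups of such a group, $\Omega$ has at most two $B$-stable prime divisors: two occur only in the $\mathrm{SL}_2/T$ case, which produces a spherical root of $X$ equal to $\alpha$; the one-divisor possibilities other than the flag variety $\mathrm{SL}_2/B$ are the normalizer-of-torus case, producing the spherical root $2\alpha$, and the horospherical case, which would put a weight outside $\Z\Sigma(G,X)$ into $\Xi(G,X)$. The first two are excluded because $\alpha$ lies in the support of no spherical root, the last because $\Sigma(G,X)$ is a $\Z$-basis of $\Xi(G,X)$. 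Hence $\Omega$ has exactly one $B$-stable prime divisor, i.e.\ $\alpha$ moves only $D$, and $\Omega$ reduces to $\p^1$ in the $\alpha$-direction.

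To see that conversely $D$ is moved only by $\alpha$, suppose a simple root $\beta\ne\alpha$ also moves $D$ and localize at $\{\alpha,\beta\}$: in the wonderful $L_{\{\alpha,\beta\}}$-variety $X'$ so obtained, the colors are those of $X$ moved by $\alpha$ or $\beta$, and the spherical roots are those of $X$ supported in $\{\alpha,\beta\}$, hence --- since $\alpha$ is in no support --- supported in $\{\beta\}$, so $\Sigma(X')$ is $\emptyset$, $\{\beta\}$ or $\{2\beta\}$. If $\Sigma(X') = \emptyset$ then $X'$ is a flag variety, in which every color is moved by a single simple root, a contradiction. If $\Sigma(X') = \{\beta\}$ (resp.\ $\{2\beta\}$), then $\beta$ moves two colors with $\langle\rho_{X'}(\cdot),\beta\rangle = 1$ (resp.\ one color with $\rho_{X'}(\cdot) = \frac12\beta^\vee|_{\Xi(X')}$); but $\alpha$ still moves $D$ in $X'$, so the remaining two assertions of the lemma --- whose proofs do not use the present one --- applied to $(X',\alpha)$ give $\rho_{X'}(D) = \alpha^\vee|_{\Xi(X')}$, and comparing the two descriptions forces $\langle\alpha^\vee,\beta\rangle = 1 > 0$, impossible for distinct simple roots.

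There remains the identity $\rho_{G,X}(D) = \alpha^\vee|_{\Xi(G,X)}$, that is, $\nu_D(f_\gamma) = \langle\alpha^\vee,\gamma\rangle$ for every spherical root $\gamma$. Intersecting with the boundary divisors $X^{(j)}$ for $j\ne i$ reduces this to the rank-one subvariety $X_{(i)}$ with $\Sigma = \{\gamma\}$ --- one checks that $\alpha$ still moves the corresponding color there, using that $\alpha$, lying in the support of no spherical root, is not absorbed into $S^p$ --- and in rank one one computes $\nu_D(f_\gamma)$ near the closed orbit $G/P^-$: there $X$ is the total space of a line bundle, $f_\gamma$ is, up to a unit and in the $\mathrm{SL}_2$-direction attached to $\alpha$, a monomial in the natural coordinates, and $D$ meets $G/P^-$ in the Schubert divisor attached to $\alpha$, whence the order of vanishing equals $\langle\alpha^\vee,\gamma\rangle$; alternatively one reads this off from the classification of rank-one wonderful $G$-varieties and their colors. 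I expect this last point to be the real difficulty: the reductions to semisimple rank one for the ``moving'' statements are essentially bookkeeping with the rank-one classification, while pinning down the precise vanishing order of $f_\gamma$ along $D$ --- equivalently, making the compatibility of $\rho$ with localization at $\alpha$ explicit in the one nontrivial rank-one case --- is where the content lies.
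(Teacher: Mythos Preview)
The paper does not prove this lemma; it merely cites \cite{Lu01}. There is therefore no proof in the paper to compare your attempt against, and what you have written is an attempt to reconstruct Luna's argument.

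Your first paragraph is essentially the standard reduction and is correct: localizing at $P_\alpha$ and passing to the semisimple rank-one quotient, the hypothesis that $\alpha$ lies in the support of no spherical root rules out the $T$ and $N(T)$ stabilizers, and your exclusion of the strictly horospherical case via the weight lattice is valid (any $B$-eigenfunction pulled back from $\mathrm{SL}_2/U$ has weight a nonzero rational multiple of $\alpha$, which cannot lie in $\Z\Sigma$ since no spherical root involves $\alpha$). The second paragraph is also correct, though more elaborate than necessary: once one knows the functional attached to a type-(b) color equals the coroot, the usual observation is simply that $D$ being moved by a second simple root $\beta$ of type (b) forces $\alpha^\vee|_\Xi=\beta^\vee|_\Xi$, which by the known list of rank-one spherical roots means $\alpha+\beta$ or $\tfrac12(\alpha+\beta)$ is a spherical root --- impossible since $\alpha$ is in no support. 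Your localization at $\{\alpha,\beta\}$ unwinds to the same thing.

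The genuine gap is in the third paragraph. Your reduction to the rank-one subvariety $X_{(i)}$ is not justified: colors of $X$ and colors of $X_{(i)}$ are $B$-stable prime divisors in \emph{different} open $G$-orbits, and there is no evident map taking $D$ to ``the corresponding color'' of $X_{(i)}$ that is compatible with valuations of $f_{\gamma_i}$. The phrase ``one checks that $\alpha$ still moves the corresponding color there'' hides exactly this point. The clean route --- and the one Luna takes --- is to stay with the $P_\alpha$-picture you already set up: in the type-(b) situation $\Omega=P_\alpha\cdot x$ fibers over $P_\alpha/B\cong\p^1$ with $D\cap\Omega$ a fiber, and the order of $f_\gamma$ along $D$ is read off from the $B$-weight $\gamma$ via the standard $\mathrm{SL}_2$-computation on $\p^1$, giving $\langle\alpha^\vee,\gamma\rangle$ directly. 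You correctly flag this step as ``where the content lies''; the content is precisely what your sketch does not supply.
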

Other links between functionals associated to colors and simple roots moving them will be found in section \ref{sect:morphisms}.
\begin{definition}
Let $X$ be a wonderful $G$-variety. We denote by $\A(G,X)$ the set of colors of $X$ moved by simple roots which are also spherical roots, and we call the triple $(S^p(G,X), \Sigma(G,X), \A(G,X))$ the {\em spherical system} of $X$.
\end{definition}
Results in Losev's paper \cite{Lo07} directly imply the following:
\begin{theorem}\cite{Lo07}\label{thm:uni}
The spherical systems $(S^p(G,X), \Sigma(G,X), \A(G,X))$ (here $\A(G,X)$ is considered just as an abstract finite set, endowed with the application $\rho_{G,X}|_{\A(G,X)}$) separate non-$G$-isomorphic wonderful $G$-varieties.
\end{theorem}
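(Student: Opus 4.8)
The plan is to deduce the statement from Losev's uniqueness theorem together with the well-known fact, going back to Luna's foundational work \cite{Lu01}, that the colors outside $\A(G,X)$ carry no information beyond what is already recorded by $\Sigma(G,X)$ and $S^p(G,X)$. Concretely, \cite{Lo07} shows that a wonderful $G$-variety is determined up to $G$-isomorphism by its full set of Luna invariants, i.e.\ by $\bigl(\Xi(G,X),\Sigma(G,X),S^p(G,X),\Delta(G,X)\bigr)$ together with the map $\rho_{G,X}$. So it is enough to check that this full datum can be reconstructed from $\bigl(S^p(G,X),\Sigma(G,X),\A(G,X)\bigr)$ together with $\rho_{G,X}|_{\A(G,X)}$.

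Carrying this out: the lattice is free of charge, since $\Sigma(G,X)$ is by definition a $\Z$-basis of $\Xi(G,X)$, so $\Xi(G,X)=\Z\,\Sigma(G,X)$. For the colors, recall that $S^p(G,X)$ is precisely the set of simple roots moving no color, and that by definition $\A(G,X)$ consists of the colors moved by some $\alpha\in S\cap\Sigma(G,X)$ (Luna's ``colors of type $a$''), on which $\rho_{G,X}$ is already given. Every remaining color $D$ is moved only by simple roots $\alpha\notin\Sigma(G,X)$. If such an $\alpha$ lies outside $\supp\Sigma(G,X)$, then Lemma \ref{lemma:moved} says $D$ is the unique color moved by $\alpha$, is moved by $\alpha$ alone, and satisfies $\rho_{G,X}(D)=\alpha^\vee|_{\Xi(G,X)}$; if $\alpha\in\supp\Sigma(G,X)$ (so that either $2\alpha\in\Sigma(G,X)$, or $D$ is ``of type $b$''), then Luna's combinatorial axioms from \cite{Lu01} determine the colors moved by $\alpha$ and the values of $\rho_{G,X}$ on them purely in terms of $\Sigma(G,X)$ and $S^p(G,X)$. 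Hence $\Delta(G,X)$ and the full map $\rho_{G,X}$ are recovered, and the theorem follows.

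Since the argument is essentially a matter of matching definitions, the only point requiring real care is the verification that the colors not lying in $\A(G,X)$ — and the values of $\rho_{G,X}$ on them — really are forced by $\bigl(S^p(G,X),\Sigma(G,X)\bigr)$ and are not independent data; equivalently, that $\bigl(S^p(G,X),\Sigma(G,X),\A(G,X),\rho_{G,X}|_{\A(G,X)}\bigr)$ coincides with Luna's homogeneous spherical datum of $X$. In particular one should note that a color moved by a simple root $\alpha$ with $2\alpha\in\Sigma(G,X)$ does \emph{not} belong to $\A(G,X)$, since $2\alpha\in\Sigma(G,X)$ forces $\alpha\notin\Sigma(G,X)$, so no such color is counted twice. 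Once this identification is in place, Losev's result yields the conclusion directly.
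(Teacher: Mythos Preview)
The paper does not give its own proof of this theorem: it simply states that ``Results in Losev's paper \cite{Lo07} directly imply the following'' and then records the statement, with no further argument. So there is nothing to compare your proposal against on the paper's side beyond that one-line citation.

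Your argument is correct and supplies exactly the kind of explanation the paper omits. The reduction you carry out --- recovering $\Xi(G,X)$ as $\Z\Sigma(G,X)$ and reconstructing the colors outside $\A(G,X)$ together with their functionals from $(S^p(G,X),\Sigma(G,X))$ alone --- is in fact spelled out later in the paper itself, in \S\ref{sect:morphisms} (the decomposition $\Delta(G,X)=\A(G,X)\cup\Delta^{a'}(G,X)\cup\Delta^b(G,X)$ with the explicit formulas for $\rho_{G,X}$ on the last two pieces). So your proof is not a different route; it is the natural unpacking of why Losev's uniqueness for spherical homogeneous spaces yields the stated separation property for wonderful varieties via their spherical systems, using material the paper records elsewhere but chose not to invoke at this point. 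One small remark: strictly speaking Losev's theorem is about spherical homogeneous spaces, so to get a statement about wonderful varieties one also uses that a wonderful variety is determined by its open orbit (equivalently, that a given spherical homogeneous space admits at most one wonderful compactification), which is part of Luna's theory in \cite{Lu01}; you implicitly use this but might make it explicit.
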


\subsection{Subvarieties, products and parabolic inductions} \label{ssect:prod}
Definitions and results in this subsection are a part of Luna's theory developed in \cite{Lu01}: we will omit here all the proofs.

Let $X$ be a wonderful $G$-variety of rank $r$, and consider a $G$-stable irreducible closed subvariety $Y$ of codimension $k$. Any such $Y$ is always wonderful, and equal to the intersection of $k$ border prime divisors of $X$:
\[
Y = \bigcap_{j=1}^k X^{(i_j)}
\]
for distinct $i_1,\ldots,i_k$ in $\{1, \ldots, r\}$. We can describe the spherical system of $Y$ in terms of that of $X$. First of all, $S^p(G,Y)=S^p(G,X)$. Then, the spherical roots $\Sigma(G,Y)$ are exactly $\Sigma(G,X) \setminus \{ \gamma_{i_1},\ldots,\gamma_{i_k} \}$. Finally, the set $\A(G,Y)$ is equal to $\A(G,X)$ minus all colors moved only by simple roots in $\Sigma(G,X)\setminus\Sigma(G,Y)$.

\begin{definition}
A wonderful $G$-variety $X$ is a {\em product} if $G=G_1\times G_2$ and $X=X_1\times X_2$ where $X_i$ is a wonderful $G_i$-variety for $i=1,2$. If $X$ is not a product, we say it is {\em indecomposable}.
\end{definition}
A wonderful variety is a product exactly when its associated data is a product, in the following sense:
\begin{definition} \label{def:prod}
The spherical system $(S^p(G,X), \Sigma(G,X), \A(G,X))$ is a {\em product} if $G=G_1\times G_2$, correspondingly $S=S_1 \cup S_2$ with $S_1\perp S_2$, and we have:
\begin{itemize}
\item[-] $S^p(G,X)=S^p_1\cup S^p_2$,
\item[-] $\Sigma(G,X)=\Sigma_1\cup\Sigma_2$,
\item[-] $\A(G,X)=\A_1\cup\A_2$,
\item[-] for all $D\in\A_1$, $\rho_{G,X}(D)$ is zero on $\Sigma_2$ and for all $D\in\A_2$, $\rho_{G,X}(D)$ is zero on $\Sigma_1$;
\end{itemize}
where we define: $S^p_i=S^p(G,X) \cap S_i$, $\Sigma_i=\{\gamma\in\Sigma(G,X) \;| \;\supp\gamma\subseteq S_i\}$, and $\A_i$ is the set of colors in $\A(G,X)$ moved only by simple roots in $S_i$ ($i=1,2$).
\end{definition}
A very simple example of product is where some factor has rank $0$; about this case, definition \ref{def:prod} immediately implies the following:
\begin{lemma} \label{lemma:supportprod}
Let $X$ be a wonderful $G$-variety, and suppose that $G=G_1\times G_2$. Then $X$ is a product $X_1\times X_2$ where $X_1$ is a rank $0$ wonderful $G_1$-variety if and only if $\supp\Sigma(G,X)$ doesn't contain any simple root of $G_1$.
\end{lemma}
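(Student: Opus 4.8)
The plan is to deduce both implications from the characterization recalled just above --- that a wonderful $G$-variety is a product exactly when its spherical system is a product in the sense of Definition \ref{def:prod} --- together with the elementary remark that a wonderful variety has rank $0$ if and only if its set of spherical roots is empty.

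First I would dispose of the easy implication. If $X=X_1\times X_2$ with $X_1$ a rank $0$ wonderful $G_1$-variety, then $\Sigma(G,X)=\Sigma(G_1,X_1)\cup\Sigma(G_2,X_2)$ with $\Sigma(G_1,X_1)=\emptyset$, so every spherical root of $X$ has support contained in $S_2$, and $\supp\Sigma(G,X)$ contains no simple root of $G_1$.

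For the converse, assume $\supp\Sigma(G,X)\subseteq S_2$, and verify that the spherical system of $X$ is a product whose $G_1$-part has empty set of spherical roots; by the characterization above this immediately yields $X=X_1\times X_2$ with $X_1$ of rank $0$. In the notation of Definition \ref{def:prod}: the equality $S^p(G,X)=S^p_1\cup S^p_2$ is automatic from $S^p(G,X)\subseteq S=S_1\cup S_2$; every $\gamma\in\Sigma(G,X)$ has $\supp\gamma\subseteq\supp\Sigma(G,X)\subseteq S_2$, so $\gamma\in\Sigma_2$, whence $\Sigma_1=\emptyset$ and $\Sigma(G,X)=\Sigma_1\cup\Sigma_2$. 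For the colors, take $D\in\A(G,X)$: by definition $D$ is moved by a simple root $\alpha$ that is also a spherical root, and then $\{\alpha\}=\supp\alpha\subseteq S_2$, so $\alpha\in S_2$; by Luna's structure theory of colors such a $D$ is moved by no simple root other than $\alpha$, hence $D\in\A_2$. Thus $\A(G,X)=\A_1\cup\A_2$ with $\A_1=\emptyset$, and the remaining compatibility condition on $\rho_{G,X}$ is vacuous since $\Sigma_1$ and $\A_1$ are empty. So the spherical system of $X$ is a product, and its $G_1$-part equals $(S^p_1,\emptyset,\emptyset)$, a spherical system of rank $0$.

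The only step that goes beyond unwinding Definition \ref{def:prod} is the claim that a color moved by a simple root which is itself a spherical root cannot be moved by any other simple root; this belongs to Luna's combinatorial theory of spherical systems (the trichotomy of colors into types $a$, $2a$, $b$) rather than to the definitions recalled in this section, and is the one place where I would be careful to cite the precise statement from \cite{Lu01}. Everything else is bookkeeping.
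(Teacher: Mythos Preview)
Your approach is exactly the one the paper intends: the lemma is stated there as an immediate consequence of Definition~\ref{def:prod}, and you are simply unpacking that. The forward implication and the verification that $\Sigma_1=\emptyset$, $\Sigma(G,X)=\Sigma_2$ are fine.

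There is, however, one incorrect assertion in the step concerning $\A(G,X)$. You claim that a color $D\in\A(G,X)$ moved by a simple root $\alpha\in\Sigma(G,X)$ is moved by \emph{no other} simple root. This is false in general: the very paper you are working in exhibits, in family~$\mathbf{2}_{\mathit{rk}>2}$ of subsection~\ref{ssect:highrank}, a color $D\in\A(G,X)$ moved by two distinct simple roots $\alpha_1$ and $\alpha$, both of which are spherical roots. What Luna's trichotomy actually gives you is weaker but sufficient: if $D\in\A(G,X)$ then \emph{every} simple root moving $D$ is itself a spherical root (equivalently, is of type~$a$). Under your hypothesis $\supp\Sigma(G,X)\subseteq S_2$, any such simple root therefore lies in $S_2$, and hence $D\in\A_2$. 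With this correction the argument goes through unchanged: $\A_1=\emptyset$, the compatibility condition on $\rho_{G,X}$ is vacuous, and the spherical system is a product with rank~$0$ first factor.

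So: right strategy, one misquoted fact from \cite{Lu01}, easily repaired.
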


Let $X$ be a wonderful $G$-variety, and suppose that the stabilizer $H$ of a point in the open $G$-orbit is such that $R(Q) \subseteq H \subseteq Q$ for some proper parabolic subgroup $Q$ of $G$, where $R(Q)$ is the radical of $Q$. Then $X$ is isomorphic to $G\times_Q Y$ where $Y$ is a $Q$-variety where the radical $R(Q)$ acts trivially. Moreover, $Y$ turns out to be wonderful under the action of $P/R(P)$, thus also under the action of $L$ a Levi subgroup of $Q$. Here $G\times_Q Y$ is defined as the quotient $(G\times Y)/\sim$ where $(g,x)\sim(gq,q^{-1}x)$ for all $q\in Q$.
\begin{definition}
Such a wonderful variety $X\cong G\times_Q Y$ is said to be a {\em parabolic induction} of $Y$ by means of $Q$. A wonderful variety which is not a parabolic induction is said to be {\em cuspidal}.
\end{definition}
On the combinatorial side, this corresponds to the following:
\begin{definition} \label{def:cuspidalss}
The spherical system $(S^p(G,X), \Sigma(G,X), \A(G,X))$ is said to be {\em cuspidal} if $\supp\Sigma(G,X) \cup S^p(G,X)=S$.
\end{definition}
If $X$ has no rank $0$ factor then it is cuspidal if and only if $\supp\Sigma(G,X)=S$.

Non-cuspidal wonderful varieties are often ``ignored'' due to the fact that the $G$-action on $X$ is completely determined by the $L$-action on $Y$, and the spherical system $(S^p(G,X), \Sigma(G,X), \A(G,X))$ is equal to $(S^p(L,Y), \Sigma(L,Y), \A(L,Y))$, of course up to the identification of the simple roots of $L$ with a subset of $S$.

On the other hand, the whole sets of colors of $X$ and $Y$ are different: the set $\Delta(L,Y)$ is in natural bijection with a proper subset of $\Delta(G,X)$. The remaining colors of $X$ are in bijection with the simple roots of $G$ which are not simple roots of $L$: any such simple root $\alpha$ is associated to a color $D$ of $X$, in such a way that $\alpha$ and $D$ behave as in lemma \ref{lemma:moved}.

In this article we won't leave aside non-cuspidal varieties: the automorphism groups $\Autz(X)$ and $\Autz(Y)$ may behave quite differently.

\section{Automorphism groups}
\subsection{Main theorem}
In his article \cite{Br07}, Brion proves a number of results about the automorphism groups of wonderful varieties. In particular:
\begin{theorem}\cite{Br07} \label{thm:brion}
Let $X$ be a wonderful $G$-variety, and $\tilde G$ be any closed connected subgroup of $\Autz(X)$ containing the image of $G$. Then:
\begin{enumerate}
\item $\tilde G$ is a semisimple group, and $X$ is wonderful under its action;
\item the colors of $X$ under the action of $\tilde G$ and under the image of $G$ are the same (if we fix in $\tilde G$ a Borel subgroup containing the image of $B$);
\item the boundary divisors of $X$ under the action of the full group $\Autz(X)$ are precisely the {\em fixed divisors}, i.e. those boundary divisors $X^{(i)}$ (under the action of $G$) such that $\langle D,\gamma_i\rangle<0$ for some color $D$.
\end{enumerate}
\end{theorem}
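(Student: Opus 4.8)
The plan is to study $\tilde G$ through its Lie algebra, which sits inside $\mathrm{Lie}(\Autz(X))=H^0(X,T_X)$, using that a wonderful variety is a \emph{regular} (in fact log homogeneous) $G$-variety in the sense of Bien and Brion \cite{BB96}: the boundary $\partial X=X^{(1)}\cup\cdots\cup X^{(r)}$ is a smooth normal crossings divisor, the logarithmic tangent sheaf $T_X(-\log\partial X)$ is globally generated, and the image of $\mathfrak g$ in $H^0(X,T_X(-\log\partial X))$ spans the tangent space at every point of the open $G$-orbit $O$ while preserving each $G$-orbit. First I would record the preliminaries: $\Autz(X)$ is a connected affine algebraic group (affine since $X$, being quasi-homogeneous under the linear group $G$, is unirational), it contains the image $\bar G$ of $G$, and --- since $\bar G$ already has a dense orbit --- every closed connected $\tilde G$ with $\bar G\subseteq\tilde G\subseteq\Autz(X)$ has an open orbit $\tilde O\supseteq O$ which is a union of $G$-orbits.

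For part (1) the main device is the residue sequence of the normal crossings divisor $\partial X$,
\[
0\longrightarrow T_X(-\log\partial X)\longrightarrow T_X\longrightarrow\bigoplus_{i=1}^{r}N_{X^{(i)}/X}\longrightarrow 0 ,
\]
which on global sections embeds $H^0(X,T_X)/\mathfrak g_X$ into $\bigoplus_i H^0\bigl(X^{(i)},N_{X^{(i)}/X}\bigr)$, where $\mathfrak g_X:=H^0(X,T_X(-\log\partial X))$ is the Lie algebra of the subgroup $\Autz(X,\partial X)$ fixing each boundary divisor. I would then show that $\mathfrak g_X$ is reductive: it acts log homogeneously, and reductivity follows from the Bien--Brion description of $\mathfrak g_X$ as a $G$-module together with the completeness of $X$, which leaves no room for a unipotent radical; a central torus is excluded because a nonzero $G$-invariant global vector field would integrate to a torus or a copy of $\mathbb G_a$ inside the centraliser of $\bar G$, hence inside $N_G(H)/H$ for the generic isotropy $H$, which is finite for a wonderful variety. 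As each $H^0(X^{(i)},N_{X^{(i)}/X})$ is a finite-dimensional $G$-module, the extension above makes $\tilde{\mathfrak g}=\mathrm{Lie}(\tilde G)$ reductive, and its radical --- a $\bar G$-submodule of $H^0(X,T_X)$, killed by the same invariant-vector-field argument --- is zero, so $\tilde G$ is semisimple. It remains to see that $X$ is wonderful under $\tilde G$: here one uses that regularity is inherited by the $\tilde G$-action (by the Bien--Brion theory, now that $\tilde G$ is known to be semisimple), so that $\tilde O$ has complement a normal crossings union of $\tilde G$-stable prime divisors; these are among the $X^{(i)}$, their total intersection contains $Y=\bigcap_i X^{(i)}$ (so is nonempty), and the orbit-separation axiom passes down because, as for $G$, the $\tilde G$-orbits are determined by which of these divisors contain the point.

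For part (2) I would fix a Borel $\tilde B\subseteq\tilde G$ with $\tilde B\supseteq B$. A color of $(\tilde G,X)$ is $\tilde B$-stable, hence $B$-stable, and not $\tilde G$-stable, hence not $\bar G$-stable, so it is a color of $(\bar G,X)$; conversely a color $D$ of $(\bar G,X)$ is not $\tilde G$-stable and, being $B$-stable and meeting the open $B$-orbit densely, stays $\tilde B$-stable --- the only alternative, a non-$\tilde G$-stable boundary divisor $X^{(i)}$ becoming a new codimension-one $\tilde B$-orbit closure in $\tilde O$, is excluded using part (3). Thus $\rho$ identifies $\Delta(G,X)$ with $\Delta(\tilde G,X)$. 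For part (3), I would first prove that a boundary divisor $X^{(i)}$ is $\Autz(X)$-stable if and only if $H^0(X^{(i)},N_{X^{(i)}/X})=0$: if this group vanishes the residue map forces every global vector field to be tangent to $X^{(i)}$; if it is nonzero, lifting a section to a global vector field (here log-homogeneity kills the obstruction) yields a one-parameter subgroup of $\Autz(X)$ moving $X^{(i)}$ off itself. Then from $0\to\mathcal O_X\xrightarrow{\,s_i\,}\mathcal O_X(X^{(i)})\to N_{X^{(i)}/X}\to 0$ and $H^1(X,\mathcal O_X)=0$ one gets $H^0(X^{(i)},N_{X^{(i)}/X})\cong H^0(X,\mathcal O_X(X^{(i)}))/\C s_i$, so it vanishes exactly when the complete linear system $|X^{(i)}|$ reduces to $\{X^{(i)}\}$. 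Since the $B$-eigenweights of $H^0(X,\mathcal O_X(X^{(i)}))$ are the dominant $\lambda\in\Xi(G,X)$ with $\langle D,\lambda\rangle\geq 0$ for all colors $D$, with $\langle X^{(j)},\lambda\rangle\geq 0$ for $j\neq i$, and with $\langle X^{(i)},\lambda\rangle\geq -1$, writing $\lambda=\sum_j c_j\gamma_j$ and using $\langle X^{(j)},\gamma_k\rangle=-\delta_{jk}$ together with the fact that a dominant weight which is a non-positive combination of spherical roots vanishes, a nonzero such $\lambda$ must have $\gamma_i$-coordinate $1$ and satisfy $\lambda\leq\gamma_i$; combining this with Luna's axioms relating colors and spherical roots, a nonzero $\lambda$ exists precisely when $\gamma_i$ itself occurs, i.e.\ precisely when $\langle D,\gamma_i\rangle\geq 0$ for every color $D$ --- that is, precisely when $X^{(i)}$ is \emph{not} a fixed divisor. (Alternatively this last translation can be carried out by localising at the rank-one subvariety $X_{(i)}=\bigcap_{j\neq i}X^{(j)}$ and using the classification of rank-one wonderful varieties.)

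The step I expect to be the main obstacle is the cohomological control underlying part (1): proving that $\mathfrak g_X$ is genuinely reductive (not just that $\Autz(X)$ is reductive) and, dually, that every section of a normal bundle $N_{X^{(i)}/X}$ predicted by the combinatorics actually lifts to a global vector field --- i.e.\ that the residue map surjects onto the expected subspace of $\bigoplus_i H^0(X^{(i)},N_{X^{(i)}/X})$. Both are vanishing/surjectivity statements about twisted tangent sheaves on $X$, and it is here that one must genuinely use the regular-variety machinery of Bien--Brion rather than the bare axioms of a wonderful variety; granting these, parts (2) and (3) are comparatively formal.
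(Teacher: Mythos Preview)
The paper does not prove this theorem: it is stated with the citation \cite{Br07} and no proof environment follows. Theorem~\ref{thm:brion} is quoted from Brion's paper \emph{The total coordinate ring of a wonderful variety} and used as a black box throughout; there is therefore no ``paper's own proof'' to compare against.

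That said, your sketch is broadly in the spirit of Brion's original argument: the residue exact sequence for $T_X(-\log\partial X)$, the identification $H^0(X^{(i)},N_{X^{(i)}/X})\cong H^0(X,\mathcal O_X(X^{(i)}))/\C s_i$, and the translation of ``$X^{(i)}$ is fixed'' into the non-existence of a non-trivial section of $\mathcal O_X(X^{(i)})$ are exactly the ingredients Brion uses. Your own assessment of the difficulty is accurate: the surjectivity of the residue map onto the relevant piece of $\bigoplus_i H^0(X^{(i)},N_{X^{(i)}/X})$ (needed to actually \emph{move} a non-fixed divisor) and the semisimplicity of $\mathfrak g_X$ are the substantive cohomological inputs, and they require the full Bien--Brion machinery for log homogeneous varieties rather than only the wonderful axioms.

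One concrete slip in part~(2): you write that a $G$-color $D$ ``meets the open $B$-orbit densely''. It does not --- by definition a $B$-stable prime divisor lies in the complement of the open $B$-orbit. What you presumably mean is that $D$ meets the open $G$-orbit (hence the open $\tilde G$-orbit), so $D$ is not a $\tilde G$-boundary divisor. The remaining point, that $D$ is $\tilde B$-stable, does not follow from that observation alone; Brion's argument for the equality $\Delta(G,X)=\Delta(\tilde G,X)$ goes instead through the fact that for a wonderful variety the classes of the colors form a basis of $\mathrm{Pic}(X)$, so both sets have the same cardinality, and then one checks the inclusion in one direction. Your parenthetical ``excluded using part~(3)'' is the right instinct for handling the $G$-boundary divisors that become non-$\tilde G$-stable, but the argument as written needs this repair.
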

Here is our main result:
\begin{theorem} \label{thm:main}
Let $X$ be a wonderful $G$-variety, and let us suppose that $G$ acts faithfully. Then $\Autz(X)=G$, except for the indecomposable varieties listed in subsections \ref{ssect:rank0}, \ref{ssect:rank1}, \ref{ssect:rank2}, \ref{ssect:highrank}, and products of wonderful varieties involving at least one of such cases.
\end{theorem}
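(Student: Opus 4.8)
Throughout, write $\tilde G=\Autz(X)$; since $G$ acts faithfully we may identify $G$ with a closed connected subgroup of $\tilde G$. By Theorem~\ref{thm:brion}, $\tilde G$ is semisimple, $X$ is wonderful under the $\tilde G$-action, the set of colors of $X$ is the same for $G$ and for $\tilde G$, and the boundary divisors of $(\tilde G,X)$ are precisely the fixed divisors of $(G,X)$; in particular $\rank(\tilde G,X)\le \rank(G,X)$. The plan is to turn the inclusion $G\subseteq\tilde G$ into an explicit relation between the two spherical systems and then to classify the cases in which they differ.

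The first reduction is to indecomposable $X$. If $X=X_1\times X_2$ with $G=G_1\times G_2$ and $X_i$ a wonderful $G_i$-variety, then any semisimple group $\tilde G\supseteq G_1\times G_2$ making $X$ wonderful has, by Theorem~\ref{thm:uni}, the spherical system of $(G_1\times G_2,X_1\times X_2)$; this is a product in the sense of Definition~\ref{def:prod}, and one checks that this forces $\tilde G$ itself to split as $\tilde G_1\times\tilde G_2$ compatibly with a product decomposition of $X$, so that $\Autz(X_1\times X_2)=\Autz(X_1)\times\Autz(X_2)$. Hence it is enough to prove that for indecomposable $X$ one has $\tilde G=G$ outside the stated list.

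Assume now $X$ indecomposable and $\tilde G\supsetneq G$. Using the compatible choices of Borel subgroups and maximal tori, together with Lemma~\ref{lemma:moved} and the further links between colors and simple roots obtained in Section~\ref{sect:morphisms}, I would first describe how $(S^p(\tilde G,X),\Sigma(\tilde G,X),\A(\tilde G,X))$ is produced from $(S^p(G,X),\Sigma(G,X),\A(G,X))$: the simple roots of $\tilde G$ that are not simple roots of $G$ move colors already present for $G$, the non-fixed boundary divisors are absorbed into the open orbit, and each surviving spherical root is modified in one of finitely many controlled ways. The classification then proceeds by rank. In rank $0$ we have $X=G/P=\tilde G/\tilde P$, and $\tilde G\supsetneq G$ occurs only for the classical finite list of projective homogeneous spaces whose automorphism group strictly contains the acting group, giving subsection~\ref{ssect:rank0}. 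In ranks $1$ and $2$ Luna's combinatorial classification provides a finite list of spherical systems; running through it and, for each, identifying $\Autz$ by exhibiting --- via Theorem~\ref{thm:uni} --- the spherical system of the candidate overgroup yields subsections~\ref{ssect:rank1} and~\ref{ssect:rank2}.

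For rank at least $3$ I would argue by a mixture of induction and direct combinatorial constraint: every intersection of boundary divisors of $X$ is again wonderful with a spherical system localized from that of $X$, so an inductive hypothesis controls the proper localizations, while each ``new'' simple root $\tilde\alpha$ of $\tilde G$ moves some color $D$, and the local shape of $(G,X)$ around $D$ --- which simple roots of $G$ move $D$, whether $D\in\A(G,X)$, and the integers $\langle D,\gamma\rangle$ for $\gamma\in\Sigma(G,X)$ --- together with the requirement that the resulting $\tilde G$-datum again satisfy Luna's combinatorial axioms, restricts the possibilities sharply; alternatively, the Lie algebra of $\tilde G$ can be extracted from the $G$-module structure of the global sections of the tangent bundle, the combinatorics serving to organise that computation. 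Carrying this out should show that the only indecomposable examples of rank $\ge 3$ are the families of subsection~\ref{ssect:highrank}. I expect the main obstacle to be exactly this last point: one must prove that the combinatorial constraints not merely bound but actually determine $\tilde G$, and then perform the bookkeeping that excludes every configuration outside the stated list --- which is where essentially all of the technical work lies.
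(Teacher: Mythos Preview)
Your outline shares the overall shape with the paper (reduce to indecomposable, then stratify by rank), but it misses the paper's central simplifying device and contains a genuine error in the product reduction.

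The error first: you invoke Theorem~\ref{thm:uni} to conclude that for $\tilde G\supseteq G_1\times G_2$ the $\tilde G$-spherical system of $X_1\times X_2$ agrees with the $G$-spherical system and is therefore a product. Theorem~\ref{thm:uni} separates wonderful varieties \emph{for a fixed acting group}; it says nothing about comparing spherical systems of the same variety under two different groups, and indeed $\Sigma(\tilde G,X)$ and $S^p(\tilde G,X)$ typically differ from $\Sigma(G,X)$ and $S^p(G,X)$. So your splitting of $\tilde G$ is not established. The paper proves $\Autz(X_1\times X_2)=\Autz(X_1)\times\Autz(X_2)$ by a different route (Corollary~\ref{corol:products}), and that route depends on a proposition you do not have.

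That proposition is the key lemma of the whole argument, Proposition~\ref{prop:bordo}: if $\partial(G,X)=\partial(\Autz(X),X)$ then either $G=\Autz(X)$ or $X$ has an exceptional rank~$0$ factor. Its proof is a short reduction, via the closed orbit, to the rank~$1$ and rank~$2$ classifications of \cite{Ah83,HS82,Br89,Wa96}. Combined with part~(3) of Theorem~\ref{thm:brion}, it converts the entire theorem into the purely combinatorial question \emph{which indecomposable wonderful varieties possess a non-fixed boundary divisor?} That question is answered directly from the Wasserman tables in rank~$\le 2$, and in rank~$\ge 3$ by observing that a non-fixed divisor forces a rank~$2$ subvariety already on the rank~$2$ list (Proposition~\ref{prop:highrank}); Lemma~\ref{lemma:noncuspfixes} disposes of the non-cuspidal situations. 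No comparison of two spherical systems, no induction, and no analysis of how new simple roots of $\tilde G$ move old colors is needed to prove the theorem --- those computations are carried out afterwards only to describe $(\Autz(X),X)$ for the listed exceptions. In particular the obstacle you anticipate in rank~$\ge 3$ (making the combinatorial constraints actually \emph{determine} $\tilde G$) never arises: once the non-fixed divisors are identified, Proposition~\ref{prop:bordo} forces $G=\Autz(X)$ everywhere else. Your proposed route --- track each extra simple root of $\tilde G$ through the colors and enforce Luna's axioms --- is not obviously wrong, but it is substantially harder, and as you yourself note, the decisive step is left open.
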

The theorem follows from corollary \ref{corol:products}, and subsections \ref{ssect:rank0}, \ref{ssect:rank1}, \ref{ssect:rank2}, \ref{ssect:highrank}. In these subsections we also determine the boundary and the invariants of $X$ as a wonderful variety with respect to the action of $\Autz(X)$, for all $X$ where $\Autz(X)\neq G$. For many of these varieties, explicit geometrical descriptions are provided.

The theorem can also be ``a posteriori'' reformulated in a different form, using certain smooth morphisms with connected fibers between wonderful varieties. This will be investigated in section \ref{sect:morphisms}.

Let us make a last remark about parabolic inductions and full automorphism groups: the following lemma will be useful in subsequent proofs.
\begin{lemma} \label{lemma:noncuspfixes}
Let $(G,X)$ a wonderful variety, and suppose that it is a parabolic induction obtained from a cuspidal one, call it $(L,Y)$. Let $\gamma$ be a spherical root of $(L,Y)$, let $\alpha$ be a simple root of $G$ but not of $L$, and suppose that $\alpha$ is non-orthogonal to some simple root of $\supp \gamma$. Then the border prime divisor of $(G,X)$ associated to $\gamma$ is fixed.
\end{lemma}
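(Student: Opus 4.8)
The plan is to exhibit an explicit color $D$ of $(G,X)$ with $\langle D,\gamma\rangle<0$; by Theorem \ref{thm:brion}(3) this is precisely the assertion that the boundary divisor associated to $\gamma$ is fixed. The natural candidate is the color attached to $\alpha$ that arises from the parabolic induction: as recalled in Subsection \ref{ssect:prod}, since $\alpha$ is a simple root of $G$ but not of $L$, there is a color $D$ of $(G,X)$ moved by $\alpha$, and the pair $(\alpha,D)$ behaves as in Lemma \ref{lemma:moved}. Note also that $\gamma\in\Xi(G,X)$ since $\Sigma(G,X)$ is a basis of $\Xi(G,X)$, so the coupling $\langle D,\gamma\rangle$ makes sense.

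To apply Lemma \ref{lemma:moved} I first need that $\alpha$ lies in the support of no spherical root of $(G,X)$. This is where the hypothesis that $(G,X)$ is a parabolic induction of a cuspidal $(L,Y)$ enters: the spherical system of $(G,X)$ equals that of $(L,Y)$ under the identification of the simple roots of $L$ with a subset of $S$, so $\supp\Sigma(G,X)=\supp\Sigma(L,Y)$ is contained in the set of simple roots of $L$. Since $\alpha$ is not a simple root of $L$, it meets no $\supp\gamma'$ for $\gamma'\in\Sigma(G,X)$; in particular $\alpha\notin\supp\gamma$. Lemma \ref{lemma:moved} then yields $\rho_{G,X}(D)=\alpha^\vee|_{\Xi(G,X)}$.

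It remains to evaluate $\langle D,\gamma\rangle=\langle\gamma,\alpha^\vee\rangle$. Write $\gamma=\sum_{\beta\in\supp\gamma}c_\beta\,\beta$ with all $c_\beta>0$, which is legitimate because a spherical root is a non-negative combination of simple roots whose support is exactly the set of simple roots occurring with nonzero coefficient. Since $\alpha\notin\supp\gamma$, each $\beta$ appearing is a simple root distinct from $\alpha$, hence $\langle\beta,\alpha^\vee\rangle\le 0$; moreover, by hypothesis some $\beta_0\in\supp\gamma$ is non-orthogonal to $\alpha$, which forces $\langle\beta_0,\alpha^\vee\rangle<0$. Therefore $\langle\gamma,\alpha^\vee\rangle=\sum_{\beta}c_\beta\langle\beta,\alpha^\vee\rangle\le c_{\beta_0}\langle\beta_0,\alpha^\vee\rangle<0$, and we conclude via Theorem \ref{thm:brion}(3).

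I do not anticipate a genuine technical obstacle: the argument is short. The only points that require care are verifying that the candidate $D$ is the right color — i.e.\ that $\rho_{G,X}(D)$ really is $\alpha^\vee|_{\Xi(G,X)}$, which rests on the fact that passing to a parabolic induction leaves the spherical system unchanged and thus keeps $\alpha$ out of the supports of spherical roots — and the elementary sign bookkeeping for the Cartan pairing, together with the translation of ``non-orthogonal'' into the strict inequality $\langle\beta_0,\alpha^\vee\rangle<0$.
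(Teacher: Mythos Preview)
Your proof is correct and follows the same approach as the paper: produce the color $D$ attached to $\alpha$ via the parabolic induction, identify $\rho_{G,X}(D)$ with $\alpha^\vee|_{\Xi(G,X)}$, and conclude $\langle D,\gamma\rangle<0$ from the sign of the Cartan pairing on the simple roots in $\supp\gamma$. Your version is in fact slightly more careful than the paper's, which asserts that ``$D$ is non-positive on any simple root'' (literally false at $\alpha$ itself) rather than noting, as you do, that $\alpha\notin\supp\gamma$ so only simple roots $\beta\neq\alpha$ contribute.
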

\begin{proof}
The spherical root $\gamma$ is a linear combination of the simple roots of its support, with positive coefficients. On the other hand, $\alpha$ is associated to some color $D$ of $(G,X)$, in such a way that $\rho_X(D)=\alpha^\vee$: see the end of subsection \ref{ssect:prod}. This implies that $D$ is non-positive on any simple root, and hence it is strictly negative on $\gamma$.
\end{proof}

\subsection{Rank 0}
\label{ssect:rank0}
Let $X$ be a wonderful $G$-variety of rank $0$, i.e.\ a homogeneous space $G/P$ where $P$ be a parabolic subgroup of $G$. Let us also suppose that $G$ acts faithfully, hence in particular $G$ is adjoint. In \cite{De77} Demazure shows that $\Autz(G/P)$ is always equal to $G$, except for a few cases called {\em exceptional}. If $G$ is simple, the only exceptional cases are:
\begin{itemize}
\item[$\mathbf{1}_{\mathit{rk}=0}$] $G=\mathsf{PSp}_{2n}$ ($n\geq 2$), $P=$ the stabilizer of a point of $\p^{2n-1}$: here $\Autz(X)=\mathsf{PSL}_{2n}$ and $G/P\cong \Autz(X)/P'\cong\p^{2n-1}$ where $P'$ is the stabilizer in $\Autz(X)$ of a point of $\p^{2n-1}$;
\item[$\mathbf{2}_{\mathit{rk}=0}$] $G=\mathsf{PSO}_{2n+1}$ ($n\geq2$), $P=$ the stabilizer of an isotropic $n$-dimensional subspace of $\C^{2n+1}$: if we choose a suitable extension to $\C^{2n+2}$ of the symmetric bilinear form defining $\mathsf{PSO}_{2n+1}$, then $\Autz(X)=\mathsf{PSO}_{2n+2}$ and $P'$=the stabilizer of an isotropic $(n+1)$-dimensional subspace of $\C^{2n+2}$; the quotients $\Autz(X)/P'$ and $G/P$ are isomorphic via $F\mapsto F\cap \C^{2n+1}$;
\item[$\mathbf{3}_{\mathit{rk}=0}$] $G=\mathsf G_2$, $P=$ the stabilizer of $[v]\in \p(V)$, where $v$ is a primitive vector in the irreducible $7$-dimensional $\mathsf G_2$-module $V$; the latter can be seen as the complexification of the real vector space of pure octonions: here $\Autz(X)=\mathsf{PSO}_7$ and it stabilizes the norm of octonions.
\end{itemize}
If $G$ is not simple, then the exceptional cases occur exactly when one of the simple components $G_i$ of $G$ (and the corresponding parabolic subgroup $P_i$ of $G_i$) appear as one the three cases above.

Now let $X=G/P$ be exceptional, with $G$ simple: $X$ is a wonderful variety of rank zero both under $\Autz(X)$ and $G$. Hence $\Xi(G,X)=\Xi(\Autz(X),X)=\{0\}$ and $\Sigma(G,X)=\Sigma(\Autz(X),X)=\emptyset$, thus $\A(G,X)=\A(\Autz(X),X)=\emptyset$; the whole set of colors $\Delta(\Autz(X),X)$ is of course in bijection with $S\setminus S^p(\Autz(X),X)$. The only invariant to be calculated is $S^p(\Autz(X),X)$:
\[
\begin{array}{lrclrcl}
\mathbf{1}_{\mathit{rk}=0} & G \!\!\!&=&\!\!\!\mathsf{\mathsf{PSp}}_{2n} \; (n\geq 2) & S^p(G,X)\!\!\!&=&\!\!\!\{\alpha_2,\ldots,\alpha_n\} \\
 & \Autz(X)\!\!\!& =& \!\!\!\mathsf{PSL}_{2n} &  S^p(\Autz(X),X)\!\!\!&=&\!\!\!\{\alpha_2,\ldots,\alpha_{2n-1}\} \\[10pt]
\mathbf{2}_{\mathit{rk}=0} & G\!\!\!&=&\!\!\!\mathsf{PSO}_{2n+1} \; (n\geq2) & S^p(G,X)\!\!\!&=&\!\!\!\{\alpha_1,\ldots,\alpha_{n-1}\} \\
 & \Autz(X)\!\!\!&=&\!\!\!\mathsf{PSO}_{2n+2} & S^p(\Autz(X),X)\!\!\!&=&\!\!\!\{\alpha_1,\ldots,\alpha_n\} \\[10pt]
\mathbf{3}_{\mathit{rk}=0} & G\!\!\!&=&\!\!\!\mathsf G_2 & S^p(G,X)\!\!\!&=&\!\!\!\{\alpha_2\}  \\
 & \Autz(X)\!\!\!&=&\!\!\!\mathsf{PSO}_7 & S^p(\Autz(X),X)\!\!\!&=&\!\!\!\{\alpha_2,\alpha_3\}
\end{array}
\]
It will be useful to notice that in all three cases $S^p(G,X)$ contains all simple roots of $G$ except for only one.

\subsection{Borders and automorphisms} \label{ssect:genres}
It is natural to ask how ``strong'' is the border as an invariant to study the full automorphism group. The rank $0$ exceptional varieties show that we can have different groups acting with same border (empty in this case), and thus same orbits, on the same wonderful variety. This can happen in higher rank too, as we will see in this subsection; however as soon as one of the groups is $\Autz(X)$ the picture is quite simple.

In the proof of the following useful proposition, we will refer to the classification of cuspidal indecomposable rank $1$ (\cite{Ah83}, \cite{HS82}, \cite{Br89}) and rank $2$ wonderful varieties (\cite{Wa96}). A list of all of them, including their invariants, can be found in \cite{Wa96}.
\begin{proposition} \label{prop:bordo}
Let $X$ be a wonderful $G$-variety where $G$ acts faithfully, and suppose $\partial(G,X)=\partial(\Autz(X),X)$. Then either $G=\Autz(X)$, or $X$ is a product where at least one of the factors is one of the exceptional rank $0$ cases listed in subsection \ref{ssect:rank0}.
\end{proposition}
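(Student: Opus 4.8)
The plan is to reduce to the indecomposable case and then exploit the hypothesis $\partial(G,X)=\partial(\Autz(X),X)$ together with Theorem \ref{thm:brion}(3). First I would observe that by Theorem \ref{thm:brion} the group $\tilde G=\Autz(X)$ is semisimple and acts on $X$ making it wonderful, and by part (2) the colors of $X$ under $\tilde G$ and under $G$ coincide. So only the boundary behaviour can distinguish the two actions, and the hypothesis says the boundaries agree. Now $\tilde G\supseteq G$ with both semisimple, so $\tilde G=G\cdot \tilde G'$ where $\tilde G'$ is a product of some simple factors "extra" to $G$; passing to the product decomposition of $X$ as a wonderful variety (using that a product of spherical systems corresponds to a product of varieties), it suffices to treat the case where $X$ is indecomposable. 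In that case I want to show that either $\tilde G=G$ or $X$ is one of the rank $0$ exceptional cases $\mathbf{1}_{rk=0}$, $\mathbf{2}_{rk=0}$, $\mathbf{3}_{rk=0}$.

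The key dichotomy is on the rank $r$ of $(G,X)$. If $r=0$, then $X=G/P$ is homogeneous and the claim is exactly Demazure's classification recalled in subsection \ref{ssect:rank0}: the only cases with $\Autz(X)\neq G$ are the three exceptional ones (and $\partial=\emptyset=\partial$ is automatic here). If $r\geq 1$, I claim the boundary hypothesis forces $\tilde G=G$. Suppose not, so $\tilde G$ strictly contains $G$. By Theorem \ref{thm:brion}(3) the boundary divisors of $X$ under $\Autz(X)$ are the fixed divisors, i.e.\ those $X^{(i)}$ with $\langle D,\gamma_i\rangle<0$ for some color $D$; since $\partial(G,X)=\partial(\Autz(X),X)$, \emph{every} boundary divisor of $(G,X)$ must already be fixed, i.e.\ for each spherical root $\gamma_i$ there is a color $D$ with $\langle D,\gamma_i\rangle<0$. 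Combined with the intersection property (any intersection of boundary divisors is again wonderful of lower rank, with the corresponding subset of spherical roots), this should be a very restrictive condition. I would then use the classification of indecomposable cuspidal wonderful varieties of rank $1$ (\cite{Ah83}, \cite{HS82}, \cite{Br89}) and rank $2$ (\cite{Wa96}) — checking the tables in \cite{Wa96} — to see which spherical systems have \emph{all} boundary divisors fixed: for such systems one computes directly, using the methods of the later subsections \ref{ssect:rank1}, \ref{ssect:rank2}, that in fact $\Autz(X)=G$ already, contradicting $\tilde G\supsetneq G$. For higher rank, I would argue that an indecomposable system all of whose spherical roots are "fixed" is built from such rank $1$ and rank $2$ pieces along boundary intersections, and reduce to the low-rank analysis, possibly invoking the list in subsection \ref{ssect:highrank}; non-cuspidal $X$ are handled by Lemma \ref{lemma:noncuspfixes} together with passing to the cuspidal core $(L,Y)$.

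The main obstacle I expect is the bookkeeping in the rank $1$ and rank $2$ case analysis: one has to go through the Akhiezer–Wasserman lists and verify, spherical system by spherical system, the implication "all boundary divisors fixed $\Rightarrow$ $\Autz(X)=G$" — equivalently, that none of the genuinely-larger-automorphism cases in subsections \ref{ssect:rank1}, \ref{ssect:rank2}, \ref{ssect:highrank} have \emph{all} their $G$-boundary divisors fixed (in each of those cases at least one boundary divisor of $(G,X)$ becomes an honest $\Autz(X)$-orbit of lower codimension, so $\partial(\Autz(X),X)\subsetneq\partial(G,X)$). This is where the hypothesis really bites, and it is essentially a finite check once the right invariants are tabulated. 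The only subtlety is making sure the reduction to indecomposable $X$ is clean: a product $X_1\times X_2$ has $\partial(G,X)=(\partial(G_1,X_1)\times X_2)\cup(X_1\times\partial(G_2,X_2))$ and $\Autz(X_1\times X_2)=\Autz(X_1)\times\Autz(X_2)$, so the boundary hypothesis holds for $X$ iff it holds for each factor, and $\Autz(X)=G$ iff $\Autz(X_i)=G_i$ for both $i$ — which gives the stated conclusion, since a factor with $\Autz(X_i)\neq G_i$ and $\partial(G_i,X_i)=\partial(\Autz(X_i),X_i)$ must by the indecomposable case be a rank $0$ exceptional variety.
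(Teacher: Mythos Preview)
Your proposal has a genuine circularity problem. You invoke two facts that in the paper are \emph{consequences} of Proposition~\ref{prop:bordo}, not inputs to it:
\begin{itemize}
\item The product formula $\Autz(X_1\times X_2)=\Autz(X_1)\times\Autz(X_2)$ is Corollary~\ref{corol:products}, whose proof applies Proposition~\ref{prop:bordo} to $(\Autz(X_1)\times\Autz(X_2),X)$. You cannot use it to reduce to the indecomposable case here.
\item More seriously, your rank $\geq 1$ argument proposes to check, via subsections~\ref{ssect:rank1}--\ref{ssect:highrank}, that every variety with all boundary divisors fixed already has $\Autz(X)=G$. But Propositions~\ref{prop:rank1}, \ref{prop:rank2}, \ref{prop:highrank} all \emph{use} Proposition~\ref{prop:bordo} in exactly that step (``$\partial(G,X)=\partial(\Autz(X),X)$, and proposition~\ref{prop:bordo} assures $\Autz(X)=G$''). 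There is no independent computation of $\Autz(X)$ for those varieties in the paper; Proposition~\ref{prop:bordo} \emph{is} that computation.
\end{itemize}

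The idea you are missing is to look at the closed orbit. Under the hypothesis $\partial(G,X)=\partial(\Autz(X),X)$, the intersection $Z=\bigcap_i X^{(i)}$ is simultaneously the closed $G$-orbit and the closed $\Autz(X)$-orbit; both groups act faithfully on it, so if $G\neq\Autz(X)$ then $Z$ is a rank~$0$ variety with two different faithful semisimple groups acting transitively, hence (by Demazure) it has an exceptional factor $(G_1,Z_1)$. The paper then exploits the very special shape of $S^p(G_1,Z_1)$ (all simple roots but one) to trace this factor through the rank~$1$ subvariety $X_{(1)}$ and the rank~$1$/rank~$2$ classification, ultimately forcing $(G,X)$ to split off a factor whose unique boundary divisor is \emph{not} fixed --- contradicting the hypothesis. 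This argument never needs to know $\Autz$ of anything beyond rank~$0$ homogeneous spaces, which is why it can serve as the foundation for the rest of the paper.
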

\begin{proof}
The case of rank $0$ is already done in subsection \ref{ssect:rank0}. So we suppose $X$ of rank at least $1$, and let us suppose that $G\neq \Autz(X)$.

The closed $G$-orbit $Z$ of $X$ is the intersection of all $G$-boundary divisors. Since $\partial(G,X)=\partial(\Autz(X),X)$, $Z$ is also the closed $\Autz(X)$-orbit. The adjoint groups of $G$ and of $\Autz(X)$ are obviously different, and it is not difficult to see that they both act faithfully on $Z$: so $Z$ must be an exceptional rank $0$ variety.
 
In other words $G$ splits into a product $G=G_1\times G_2$ (with $G_2$ possibly trivial), and correspondingly $Z=Z_1\times Z_2$, in such a way that $(G_1,Z_1)$ appears as one of the three cases of subsection \ref{ssect:rank0} (actually, the adjoint group of $G_1$, but this is irrelevant in what follows).

If $(G,X)$ itself splits as a product $(G_1,X_1)\times (G_2,X_2)$, where $(G_1,X_1)$ has rank zero, then $X_1=Z_1$ and we are done: $(G_1,X_1)$ will be an exceptional rank $0$ factor of $X$. Moreover, for this to happen it is sufficient that $\supp \Sigma(G,X)$ contains no simple roots of $G_1$, see lemma \ref{lemma:moved}.

If $(G,X)$ doesn't split as such a product, there exist at least one spherical root, call it $\gamma_1$, whose support contains some simple root of $G_1$. Recall its associated rank $1$ wonderful $G$-subvariety:
\[
X_{(1)}=\bigcap_{j\neq 1} X^{(j)}.
\]
It has only one spherical root, namely $\gamma_1$, and of course the same closed orbit $Z$ as $X$.

Like any wonderful variety of rank $1$, $(G,X_{(1)})$ can be obtained from a cuspidal, indecomposable rank $1$ variety $(L,X_{(1)}')$ using parabolic inductions and products by rank $0$ wonderful varieties.

But $\supp\gamma_1$ contains some simple root of $G_1$: hence the factor $(G_1,Z_1)$ of $(G,Z)$ cannot ``appear'' after a product by a rank $0$ variety. Moreover, $S^p(G_1,Z_1)$ contains all simple roots of $G_1$ except one: this implies that $(G_1,Z_1)$ cannot appear after a parabolic induction. We conclude that $G_1$ was already a factor of $L$, and $Z_1$ was already a factor of $Z'$ the closed orbit of $X_{(1)}'$.

Now we look at the list in \cite{Wa96} of cuspidal indecomposable rank $1$ wonderful varieties. Our $(L,X_{(1)}')$ must appear there, and recall that $(G_1,Z_1)$ is a simple exceptional factor of its closed orbit $(L,Z')$. This doesn't match the varieties in the list, except for the following particular situation: we must have that $G_1=L$, that $(G,X_{(1)})$ is a product where the only rank $1$ factor is $(G_1,X_{(1)}')$, and that $(G_1,X_{(1)}')$ is equal to one of four candidates: cases 10, 11, 13, 14 of \cite{Wa96} (see below for details). More precisely, cases 10 or 11 could occur if $(G_1,Z_1)=\mathbf{2}_{\mathit{rk}=0}$, and cases 13 or 14 if $(G_1,Z_1)=\mathbf{3}_{\mathit{rk}=0}$. No case could occur if $(G_1,Z_1)=\mathbf{1}_{\mathit{rk}=0}$.

Then, we look at the classification in rank $2$ presented in \cite{Wa96}: none of our four candidates appears as a rank $1$ subvariety of a cuspidal indecomposable rank $2$ variety. As a consequence, $(G_1,X_{(1)}')$ cannot be a $G$-subvariety (nor a factor of a $G$-subvariety) of a higher rank wonderful variety unless the latter is a product where one of the factors is $(G_1,X_{(1)}')$. This applies of course to $(G,X)$, so we have that $(G,X)$ is itself a product where $(G_1,X_{(1)}')$ is one of the factors. By the way, this implies that $\Autz(X_{(1)}')$ is naturally a subgroup of $\Autz(X)$.

Finally, for all the four candidates for $(G_1,X_{(1)}')$, the border divisor is not fixed. But $(G_1,X_{(1)}')$ is a factor of $(G,X)$, so at least one of the border divisors of $(G,X)$ is not fixed: this is absurd since we were supposing that $\partial(G,X)=\partial(\Autz(X),X)$.
\end{proof}
As we said, the proposition fails if we replace $\Autz(X)$ with some connected group $\tilde G$ strictly between $\Autz(X)$ and $G$: the above proof suggests the rank $1$ cases 10, 11, 13, 14 of \cite{Wa96} as counterexamples.

If $(G,X)$ is one of them, then $X$ is homogeneous under $\Autz(X)$, but there exist an intermediate connected group $G\subset \tilde G \subset \Autz(X)$ such that $X$ is wonderful both under $G$ and under $\tilde G$ with the same orbits. However, a proof similar to proposition \ref{prop:bordo} shows that these are the only indecomposable wonderful varieties (regardless of the rank, provided it is $\geq 1$) having this property.

Let us see these cases in details. For $(G,X)$ equal to cases 10, 11, 13, 14, we have that $(\tilde G,X)$ is equal resp. to cases 5$\mathsf D$, 6$\mathsf D$, 7$\mathsf B$, 8$\mathsf B$. More precisely:
\begin{itemize}
\item[11-6$\mathsf D$]:\hspace{5pt} $X=\p^7$, $\partial (G,X)=\partial (\tilde G,X)$ is a smooth quadric, and:
\[
\begin{array}{cc}
\begin{array}{rcl}
G \!\!\!& = & \!\!\!\mathsf{PSO}_7 \\
S^p(G,X) \!\!\!&=& \!\!\!\{ \alpha_1, \alpha_2 \}\\
\Sigma(G,X) \!\!\!&=& \!\!\!\left\{  \alpha_1 + 2\alpha_2 + \alpha_3 \right\} \\
\A(G,X) \!\!\! &=& \!\!\! \emptyset \\
\end{array}
&
\begin{array}{rcl}
\tilde G \!\!\!& = & \!\!\!\mathsf{PSO}_8 \\
S^p(\tilde G,X) \!\!\!&=& \!\!\!\{ \alpha_1, \alpha_2 \}\\
\Sigma(\tilde G,X) \!\!\!&=& \!\!\!\left\{  2\alpha_1 + 2\alpha_2 + \alpha_3 + \alpha_4 \right\} \\
\A(\tilde G, X) \!\!\! &=& \!\!\! \emptyset \\
\end{array}
\end{array}
\]
\item[14-8$\mathsf B$]:\hspace{5pt} $X=\p^6$, $\partial (G,X)=\partial (\tilde G,X)$ is a smooth quadric, and:
\[
\begin{array}{cc}
\begin{array}{rcl}
G \!\!\!& = & \!\!\!\mathsf{G}_2 \\
S^p(G,X) \!\!\!&=& \!\!\!\{ \alpha_2 \} \\
\Sigma(G,X) \!\!\!&=&\!\!\! \left\{  4 \alpha_1 + 2 \alpha_2 \right\} \\
\A(G, X) \!\!\! &=& \!\!\! \emptyset \\
\end{array}
&
\begin{array}{rcl}
\tilde G \!\!\!& = & \!\!\!\mathsf{PSO}_7 \\
S^p(\tilde G,X) \!\!\!&=& \!\!\!\{ \alpha_2, \alpha_3 \}\\
\Sigma(\tilde G,X) \!\!\!&=&\!\!\! \left\{  2\alpha_1 + 2\alpha_2 + 2\alpha_3 \right\} \\
\A(\tilde G, X) \!\!\! &=& \!\!\! \emptyset \\
\end{array}
\end{array}
\]
\end{itemize}
Case 10-5$\mathsf D$ (resp. 13-7$\mathsf B$) is a smooth quadric of dimension 7 (resp. 6), a 2:1 cover of case 11-6$\mathsf D$ (resp. 14-8$\mathsf B$). The group $\tilde G$ is $\mathsf{SO_8}$ (resp. $\mathsf{PSO_7}$) and all the invariants are the same of case 11-5$\mathsf D$ (resp. 14-8$\mathsf B$), except that the spherical root is always one half of the spherical root of case 11-5$\mathsf D$ (resp. 14-8$\mathsf B$), under both the actions of $G$ and $\tilde G$.

\begin{corollary} \label{corol:products}
If a wonderful variety $(G,X)$ is a product of two wonderful varieties $(G_1,X_1)$, $(G_2,X_2)$, then $\Autz(X)=\Autz(X_1)\times \Autz(X_2)$.
\end{corollary}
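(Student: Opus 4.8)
The plan is to show both inclusions between $\Autz(X)$ and $\Autz(X_1)\times\Autz(X_2)$, starting from the easy one. Since $X=X_1\times X_2$, any automorphism of $X_1$ and any automorphism of $X_2$ act coordinatewise on $X$, so there is a natural injection $\Autz(X_1)\times\Autz(X_2)\hookrightarrow\Autz(X)$. The content of the statement is the reverse inclusion: every connected automorphism of $X$ preserves the product structure and splits as a product of automorphisms of the factors.

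For the hard direction I would first reduce to the case where $G$ acts faithfully. If $K$ is the kernel of $G\to\Autz(X)$, then $K$ splits as $K_1\times K_2$ with $K_i$ in the kernel of $G_i\to\Autz(X_i)$ (because $X_i$ is $G_i$-quasihomogeneous, the kernel is exactly the elements acting trivially on the open orbit, which is a product), and replacing $G_i$ by $G_i/K_i$ changes neither side of the claimed equality. So assume $G$ acts faithfully; then by Theorem \ref{thm:brion}, $\tilde G:=\Autz(X)$ is semisimple and $X$ is wonderful under $\tilde G$, with the same colors and (by part (3)) a boundary $\partial(\tilde G,X)$ contained in $\partial(G,X)$ — indeed $\partial(\tilde G,X)$ consists precisely of the fixed boundary divisors. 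Because $X=X_1\times X_2$ as a $G$-variety, the boundary divisors of $(G,X)$ are the $X_1^{(i)}\times X_2$ and the $X_1\times X_2^{(j)}$, and a divisor of the first type is fixed for $(G,X)$ exactly when $X_1^{(i)}$ is fixed for $(G_1,X_1)$ (the color realizing $\langle D,\gamma_i\rangle<0$ has support in $S_1$ by the product condition in Definition \ref{def:prod}), and similarly for the second type.

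The key step is then to produce the splitting of $\tilde G$ itself. Since $\tilde G$ is semisimple it is the almost-direct product of its simple normal subgroups; I would group these according to whether the corresponding simple roots of $\tilde G$ (in a Borel $\tilde B\supset B$) lie in the support of $\Sigma(\tilde G,X_1)$ or of $\Sigma(\tilde G,X_2)$ — one should check, using that the colors of $X$ are unchanged and that the spherical root lattice $\Xi(\tilde G,X)=\Xi(G,X)$ splits compatibly with the product decomposition $\Xi(G,X_1)\oplus\Xi(G,X_2)$, that the spherical system of $(\tilde G,X)$ is again a product in the sense of Definition \ref{def:prod}, say with factors $S_1'\perp S_2'$ refining the decomposition $S=S_1\cup S_2$ and with $S_i\subseteq S_i'$ and no root of $S_i'$ orthogonal to all of $S_i$. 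Write $\tilde G=\tilde G_1\times\tilde G_2$ accordingly. By Theorem \ref{thm:uni}, a wonderful variety with a product spherical system is a product of wonderful varieties, so $X\cong X_1'\times X_2'$ as a $\tilde G$-variety with $(\tilde G_i,X_i')$ having spherical system the $i$-th factor; comparing with the $G$-structure (and using that $G_i\subseteq\tilde G_i$, which forces $X_i'$ to be $X_i$ enlarged to its $\tilde G_i$-action) identifies $X_i'=X_i$ and $\tilde G_i\subseteq\Autz(X_i)$, whence $\Autz(X)=\tilde G_1\times\tilde G_2\subseteq\Autz(X_1)\times\Autz(X_2)$, giving equality.

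The main obstacle I anticipate is the combinatorial bookkeeping in that last paragraph: one must verify that the spherical system of $(\tilde G,X)$ really is a product, i.e.\ that no simple root of $\tilde G$ straddles both factors and that the map $\rho$ on $\A(\tilde G,X)=\A(G,X)$ respects the decomposition. This should follow from the fact (Theorem \ref{thm:brion}(2)) that $\tilde G$ and $G$ have the same colors together with the way $\tilde G$-simple roots are built from $G$-simple roots — but making this precise, especially ruling out that a single simple root of $\tilde G$ could have nonzero coefficient in spherical roots coming from both $X_1$ and $X_2$, is where the real work lies. An alternative, perhaps cleaner, route to the same point is geometric: the projections $X\to X_1$ and $X\to X_2$ are the two maps contracting complementary sets of boundary divisors, they are $G$-equivariant, and one can try to show directly that any $\tilde G$-action on $X$ (which has the same orbit structure as $G$ on the open orbit after possibly enlarging) must preserve these two contractions — then $\tilde G$ acts on each $X_i$ and the kernel of one action is a normal subgroup mapping isomorphically under the other, forcing the product decomposition of $\tilde G$.
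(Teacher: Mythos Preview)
Your approach differs substantially from the paper's, and the hard direction as written has a real gap that you yourself flag. The paper bypasses the combinatorial verification entirely by working with the \emph{intermediate} group $\Autz(X_1)\times\Autz(X_2)$ rather than attempting to decompose $\Autz(X)$ directly. Concretely: under $\Autz(X_1)\times\Autz(X_2)$ the variety $X$ is again a wonderful product, and by Theorem~\ref{thm:brion}(3) every boundary divisor of each factor $(\Autz(X_i),X_i)$ is fixed, hence every boundary divisor of the product is fixed; this forces $\partial(\Autz(X_1)\times\Autz(X_2),X)=\partial(\Autz(X),X)$. Now Proposition~\ref{prop:bordo}, applied with $\Autz(X_1)\times\Autz(X_2)$ in the role of ``$G$'', finishes in one stroke (the exceptional rank~$0$ alternative is excluded because each factor already carries the action of its full connected automorphism group, so no factor can be exceptional).

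The obstacle you anticipate is genuine and not merely bookkeeping. You would need to know, for the unknown semisimple group $\tilde G=\Autz(X)$, that its Dynkin diagram splits into two orthogonal pieces extending $S_1$ and $S_2$; but the simple roots of $\tilde G$ are not those of $G$ (a $\mathsf{PSp}_{2n}$ factor of $G$ can become $\mathsf{PSL}_{2n}$ in $\tilde G$, with an entirely different root system), and $\Sigma(\tilde G,X)$ is in general not equal to $\Sigma(G,X)$, so the assertion that $\Xi(\tilde G,X)$ ``splits compatibly'' with $\Xi(G,X_1)\oplus\Xi(G,X_2)$ is unavailable without already understanding how $\tilde G$ is built from $G$. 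In the paper's logical order this corollary is an \emph{input} to the main classification (Theorem~\ref{thm:main}), so an argument that implicitly requires knowing the structure of $\tilde G$ risks circularity. The paper's device of comparing boundaries for the known group $\Autz(X_1)\times\Autz(X_2)$ and invoking Proposition~\ref{prop:bordo} sidesteps all of this.
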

\begin{proof}
The wonderful variety $(\Autz(X_1)\times \Autz(X_2),X)$ is of course the product of $(\Autz(X_1),X_1)$ and $(\Autz(X_2),X_2)$. All boundary prime divisors of $(\Autz(X_1),X_1)$ and of $(\Autz(X_2),X_2)$ are fixed (theorem \ref{thm:brion}), and from the analysis of the invariants associated to a product this implies that all boundary prime divisors of $(\Autz(X_1)\times \Autz(X_2),X)$ are fixed. In other words, we have $\partial(\Autz(X),X)=\partial(\Autz(X_1)\times \Autz(X_2),X)$. Now our corollary is an easy consequence of proposition \ref{prop:bordo} applied to the wonderful variety $(\Autz(X_1)\times \Autz(X_2),X)$.
\end{proof}
This corollary allows us to deal, from now on, with indecomposable varieties only.

\subsection{Rank 1}\label{ssect:rank1}
\begin{proposition} \label{prop:rank1}
Let $X$ be a indecomposable wonderful $G$-variety of rank $1$ where $G$ acts faithfully. Then $G=\Autz(X)$ if and only if $X$ is not cuspidal, or $G=\mathsf G_2$ and $X$ is the only rank $1$ wonderful $G$-variety having spherical root $\alpha_1+\alpha_2$ (case 15 of \cite{Wa96}).
\end{proposition}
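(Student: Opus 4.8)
The plan is to combine Brion's structural results (Theorem \ref{thm:brion}) with the known classification of cuspidal indecomposable rank $1$ wonderful varieties in \cite{Wa96}. First I would dispose of the non-cuspidal case: if $(G,X)$ is not cuspidal, it is a parabolic induction from a cuspidal $(L,Y)$, and since the rank is $1$ the variety $Y$ has a single spherical root $\gamma$ whose support is all of the simple roots of $L$. As $X$ is indecomposable and $G$ acts faithfully, the simple roots of $G$ outside $L$ must be "attached" to $\supp\gamma$ (otherwise a simple root orthogonal to everything would give a rank $0$ factor of $G$ acting trivially on $X$, contradicting faithfulness, or would force decomposability). Hence there is a simple root $\alpha$ of $G$, not of $L$, non-orthogonal to $\supp\gamma$, and Lemma \ref{lemma:noncuspfixes} applies: the unique boundary divisor of $(G,X)$ is fixed. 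By part (3) of Theorem \ref{thm:brion} this means the boundary under $\Autz(X)$ coincides with the boundary under $G$, so Proposition \ref{prop:bordo} forces $G=\Autz(X)$ (the rank $0$ product exception cannot occur here since $X$ is indecomposable of rank $1$). Actually I should be slightly careful: I need to rule out that $X$ itself is a product with a rank $0$ factor, but indecomposability handles this.

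For the cuspidal case the strategy is to go through the finite list of cuspidal indecomposable rank $1$ wonderful varieties in \cite{Wa96} and, for each, decide whether its unique boundary divisor is fixed, i.e.\ whether $\langle D,\gamma\rangle<0$ for some color $D$. By part (3) of Theorem \ref{thm:brion}, if the boundary is fixed then $\partial(\Autz(X),X)=\partial(G,X)$ and Proposition \ref{prop:bordo} gives $G=\Autz(X)$. So the real content is: among the cuspidal indecomposable rank $1$ varieties, exactly those with non-fixed boundary can have $\Autz(X)\supsetneq G$, and I claim that among these the only one where $\Autz(X)$ does not in fact strictly contain $G$ is case 15 ($G=\mathsf G_2$, spherical root $\alpha_1+\alpha_2$). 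The varieties with non-fixed boundary are, as the proof of Proposition \ref{prop:bordo} already hints, cases 10, 11, 13, 14 (whose automorphism groups genuinely grow, to $\tilde G$ as recorded in subsection \ref{ssect:genres}) together with possibly a few more; I would inspect Wasserman's table, compute $\rho_{G,X}(D)$ on the spherical root for each color via Lemma \ref{lemma:moved} when $\supp\gamma$ contains no simple root moving $D$, and otherwise use the combinatorial axioms of spherical systems, to confirm that in every case except case 15 a strictly larger semisimple overgroup exists, while for case 15 one checks directly that no semisimple group properly contains $\mathsf G_2$ acting on this particular $X$ with the same colors.

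Concretely, for case 15 I would argue as follows: here $X$ is a rank $1$ wonderful $\mathsf G_2$-variety with spherical root $\alpha_1+\alpha_2$ and the boundary is not fixed, so a priori $\Autz(X)$ could be larger; but by Theorem \ref{thm:brion}(1)--(2) any $\tilde G$ with $G\subsetneq\tilde G\subseteq\Autz(X)$ is semisimple, has the same colors, and makes $X$ wonderful of rank $1$. One then checks that the spherical system of $(\tilde G, X)$, restricted back along $\mathsf G_2\hookrightarrow\tilde G$, would have to reproduce the spherical system of $(\mathsf G_2, X)$; examining the possible rank $1$ spherical systems of semisimple groups properly containing $\mathsf G_2$ (the natural candidate being $\mathsf{SO}_7$ or $\mathsf{PSO}_7$, exactly as in case 14), one sees the numerology of the spherical root and of $S^p$ is incompatible — case 14, not case 15, is the one that embeds into a $\mathsf{PSO}_7$-variety. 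Hence $\Autz(X)=\mathsf G_2$ for case 15.

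The main obstacle I expect is the bookkeeping in the cuspidal case: correctly reading off from \cite{Wa96} which boundary divisors are fixed, and in particular nailing down the borderline case 15 versus case 14 — both involve $\mathsf G_2$ and a $7$-dimensional ambient, so one must be careful with which spherical root ($4\alpha_1+2\alpha_2$ for case 14, $\alpha_1+\alpha_2$ for case 15) admits an $\mathsf{SO}_7$-overgroup and which does not. I would also want to double-check the faithfulness/indecomposability reductions at the start, since a rank $0$ factor sneaking in would break the reduction to Proposition \ref{prop:bordo}.
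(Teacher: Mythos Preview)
Your treatment of the non-cuspidal case is essentially the paper's argument: parabolic induction from a cuspidal $(L,Y)$, indecomposability forces a simple root of $G$ outside $L$ non-orthogonal to $\supp\gamma$, Lemma \ref{lemma:noncuspfixes} fixes the boundary, and Proposition \ref{prop:bordo} finishes.

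Your cuspidal analysis, however, contains a genuine error: case 15 has a \emph{fixed} boundary divisor, not a non-fixed one. With $G=\mathsf G_2$, $\gamma=\alpha_1+\alpha_2$, $S^p=\emptyset$, $\A=\emptyset$, there are two colors $D_1,D_2\in\Delta^b$ moved respectively by $\alpha_1,\alpha_2$, with $\rho(D_i)=\alpha_i^\vee$. Then $\langle D_1,\gamma\rangle=\alpha_1^\vee(\alpha_1)+\alpha_1^\vee(\alpha_2)=2-3=-1<0$, so the divisor is fixed. Hence $\partial(G,X)=\partial(\Autz(X),X)$ and Proposition \ref{prop:bordo} gives $\Autz(X)=G$ immediately; no overgroup analysis is needed.

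More seriously, you have the dichotomy backwards. In rank $1$ a non-fixed boundary divisor does not merely \emph{allow} $\Autz(X)\supsetneq G$; by Theorem \ref{thm:brion}(3) it forces $X$ to be homogeneous under $\Autz(X)$, hence automatically $\Autz(X)\supsetneq G$. The actual picture from \cite{Wa96} is that \emph{every} cuspidal indecomposable rank $1$ variety has non-fixed boundary \emph{except} case 15. Cases 10, 11, 13, 14 are not ``the varieties with non-fixed boundary together with possibly a few more''; they are four among many, singled out in Proposition \ref{prop:bordo} for an entirely different reason (their closed orbit is an exceptional rank $0$ variety). So the cuspidal case is a one-line check isolating case 15 as the unique fixed-boundary example, not a search for overgroups among the non-fixed ones. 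Your argument for case 15---comparing with case 14 and ruling out $\mathsf{PSO}_7$ by ``numerology''---is both unnecessary and, as written, not a proof.
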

\begin{proof}
Theorem \ref{thm:brion} implies that if a rank $1$ variety $(G,X)$ has a non-fixed border prime divisor, then $\Autz(X)$ strictly contains $G$, and $X$ is homogeneous under the action of $\Autz(X)$. This regards all the cuspidal indecomposable rank $1$ varieties of \cite{Wa96} except one: case 15 (in the notations of the cited paper). On the other hand if $(G,X)$ is equal to case 15, it is not homogeneous under $\Autz(X)$ thanks to the same theorem: its border prime divisor is fixed, hence $\partial(G,X)=\partial(\Autz(X),X)$ and proposition \ref{prop:bordo} assures that $\Autz(X)=G$ in this case.

Now we turn to the non-cuspidal case. Any non-cuspidal indecomposable rank $1$ variety $(G,X)$ is obtained by parabolic induction from a cuspidal one. The latter might be a product, however it will have only one cuspidal rank $1$ factor, call it $(L,X')$. It is very easy to see from lemma \ref{lemma:supportprod} that in order to obtain an indecomposable $(G,X)$, some simple root of $G$ but not of $L$ must be non-orthogonal to some simple root of $L$.

Since $(L,X')$ is cuspidal and indecomposable of rank $1$, $\supp\Sigma(L,X')$ is the whole set of simple roots of $L$. Lemma \ref{lemma:noncuspfixes} assures in this case that the border prime divisor of $(G,X)$ is fixed, and $\Autz(X)=G$ thanks to proposition \ref{prop:bordo}.
\end{proof}
Let us describe $(\Autz(X),X)$ for all rank $1$ varieties $(G,X)$ that are homogeneous under $\Autz(X)$. In \cite{Ah83} this is done for all $X$ having an affine open $G$-orbit, so we will work on the remaining ones, namely cases $9\mathsf B$ and $9\mathsf C$ of \cite{Wa96}.
\begin{itemize}
\item[$9\mathsf B$] Here $G=\mathsf{PSO}_{2n+1}$ ($n\geq 2$), $\Sigma(G,X)=\{\gamma_1=\alpha_1+\cdots+\alpha_n\}$, $S^p(G,X)=\{\alpha_2,\ldots,\alpha_{n-1}\}$,
$\A(X)=\emptyset$.
The generic stabilizer (described in \cite{Wa96}) has a Levi component isomorphic to $\mathsf{GL}_n$ and the unipotent radical isomorphic to $\bigwedge^2 \C^n$. If $\omega$ is the symmetric bilinear form on $\C^{2n+1}$ defining $G$, the open $G$-orbit of $X$ is isomorphic to:
\[
\left\{ (E,F) \in \Gr(2n, \C^{2n+1})\times \Gr(n, \C^{2n+1}) \left|
\begin{array}{ll}
\omega|_E \textrm{ nondegenerate},\\
F \textrm{ isotropic}, \; F \subset E^\perp
\end{array}\right.
\right\}.
\]
If we ignore the condition ``$\omega|_E$ nondegenerate'' we obtain the whole $X$. Therefore, with a suitable extension of $\omega$ to $\C^{2n+2}$, $X$ is isomorphic to:
\[
\left\{ (E',F') \in \Gr(2n+1, \C^{2n+2})\times \Gr(n+1, \C^{2n+2}) \left|
\begin{array}{ll}
F' \textrm{ isotropic},\\
F' \subset (E')^\perp 
\end{array} \right.
\right\}
\]
where the isomorphism is given by $(E',F')\mapsto (E'\cap \C^{2n+1}, F'\cap \C^{2n+1})$. Now it is evident that $X$ is homogeneous under $\Autz(X)=\mathsf{PSO}_{2n+2}$, and $S^p(\Autz(X),X)=\{ \alpha_2, \ldots, \alpha_n \}$.
\item[$9\mathsf C$] Here $G=\mathsf{PSp}_{2n}$ ($n\geq 2$), $\Sigma_G(X)=\{\gamma_1=\alpha_1+2\alpha_2+\cdots+2\alpha_{n-1}+\alpha_n\}$, $S^p(G,X)=\{\alpha_3,\ldots,\alpha_n\}$,
$\A(X)=\emptyset$.
The generic stabilizer has a Levi component isogenous to $\mathsf{Sp}_{2n-2}\times \C^\times$ and the unipotent radical isomorphic to $\C$. If $\omega$ is the skew-symmetric bilinear form defining $G$, the open $G$-orbit of $X$ is isomorphic to:
\[
\left\{ (E,F) \in \Gr(2, \C^{2n})\times \p(\C^{2n}) \left|
\begin{array}{ll}
E \textrm{ nonisotropic},\\
F \subset E
\end{array}\right.
\right\}.
\]
If we ignore the condition ``$E$ nonisotropic'' we obtain the whole $X$. So $X$ is omogeneous under $\Autz(X)=\mathsf{PSL}_{2n}$, and we have $S^p(\Autz(X),X)=\{ \alpha_3, \ldots, \alpha_{2n-1} \}$.
\end{itemize}

\subsection{Rank 2} \label{ssect:rank2}
\begin{proposition} \label{prop:rank2}
Let $X$ be an indecomposable $G$-wonderful variety of rank $2$ where $G$ acts faithfully. If $X$ is cuspidal then $G=\Autz(X)$ except for the following cases: 
\begin{enumerate}
\item[$\mathbf{1}_{\mathit{rk}=2}$] $G=\mathsf{PSL}_2\times \mathsf{PSp}_{2n}$ ($n\geq2$), $X$ is case 1 of type $\mathsf C$ of \cite{Wa96}; in particular:
\begin{eqnarray*}
S^p(G,X) \!\!\!&=& \!\!\!\{ \alpha'_3,\ldots,\alpha'_n \}\\
\Sigma(G,X) \!\!\!&=& \!\!\!\{ \gamma_1=\alpha_1+\alpha'_1, \gamma_2 = \alpha'_1+2\alpha'_2+\cdots+2\alpha'_{n-1}+\alpha'_n \} \\
\A(G,X) \!\!\! &=& \!\!\! \emptyset 
\end{eqnarray*}
\item[$\mathbf{2}_{\mathit{rk}=2}$] $G=\mathsf{PSp}_{2n}$ ($n\geq2$), $X$ is the first of the two cases 5 of type $\mathsf C$ of \cite{Wa96}; in particular:
\begin{eqnarray*}
S^p(G,X) \!\!\!&=& \!\!\!\{ \alpha_3,\ldots,\alpha_n \}\\
\Sigma(G,X) \!\!\!&=& \!\!\!\{ \gamma_1=2\alpha_1, \gamma_2 = \alpha_1+2\alpha_2+\cdots+2\alpha_{n-1}+\alpha_n \} \\
\A(G,X) \!\!\! &=& \!\!\! \emptyset 
\end{eqnarray*}
\item[$\mathbf{3}_{\mathit{rk}=2}$] $G=\mathsf{PSp}_{2n}$ ($n\geq2$), $X$ is the second of the two cases 5 of type $\mathsf C$ of \cite{Wa96}; in particular:
\begin{eqnarray*}
S^p(G,X) \!\!\!&=&\!\!\! \{ \alpha_3,\ldots,\alpha_n \}\\
\Sigma(G,X) \!\!\!&=& \!\!\!\{ \gamma_1=\alpha_1, \gamma_2 = \alpha_1+2\alpha_2+\cdots+2\alpha_{n-1}+\alpha_n \} \\
\A(G,X) \!\!\! &=& \!\!\! \{ D^+_1, D^-_1 \} \\
\rho_{G,X}\!\!\!  &\colon &\!\!\!
\left\{
\begin{array}{ll} 
\langle D^+_1, \gamma_1 \rangle = 1 & \langle D^-_1, \gamma_1 \rangle = 1 \\ 
\langle D^+_1, \gamma_2 \rangle = 0 & \langle D^-_1, \gamma_2 \rangle = 0  
\end{array}
\right.
\end{eqnarray*}
\item[$\mathbf{4}_{\mathit{rk}=2}$] $G=\mathsf{PSO}_{9}$, $X$ is case 3 of type $\mathsf B$ of \cite{Wa96}; in particular:
\begin{eqnarray*}
S^p(G,X) \!\!\!&=&\!\!\! \{ \alpha_2,\alpha_3 \}\\
\Sigma(G,X) \!\!\!&=& \!\!\!\{ \gamma_1=\alpha_2+2\alpha_3+3\alpha_4, \gamma_2 = \alpha_1+\alpha_2+\alpha_3+\alpha_4 \} \\
\A(G,X) \!\!\! &=& \!\!\! \emptyset 
\end{eqnarray*}
\end{enumerate}
If $X$ is not cuspidal, then $G=\Autz(X)$ except for:
\begin{enumerate}
\item[$\mathbf{5}_{\mathit{rk}=2}$] any indecomposable cuspidal variety $(G,X)$ obtained by parabolic induction from $(L,Y)$, where we require that $(L,Y)$ is a product (of possibly only one factor) having a rank $2$ factor equal to the cuspidal case 1 above, and that $G=G_1\times \mathsf{PSp}_{2n}$, i.e. the component $\mathsf{PSp}_{2n}$ of the group $L$ is preserved when passing from $L$ to $G$.
\end{enumerate}
\end{proposition}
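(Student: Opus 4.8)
Throughout, the engine is the following equivalence for an indecomposable wonderful $G$-variety $(G,X)$ of rank $\geq 1$ on which $G$ acts faithfully: $G=\Autz(X)$ if and only if both boundary divisors of $(G,X)$ are fixed. Indeed, if some $X^{(i)}$ is not fixed then by Theorem \ref{thm:brion}(3) it is not a boundary divisor for $\Autz(X)$, so $\partial(G,X)\neq\partial(\Autz(X),X)$ and $G\neq\Autz(X)$; conversely, if both are fixed then $\partial(G,X)=\partial(\Autz(X),X)$ and Proposition \ref{prop:bordo} gives $G=\Autz(X)$, the exceptional alternative there being impossible since an exceptional rank-$0$ factor would force $X$ to decompose. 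Because by Theorem \ref{thm:brion}(3) a boundary divisor $X^{(i)}$ is fixed exactly when $\langle D,\gamma_i\rangle<0$ for some color $D$, the whole statement reduces to deciding, for each $(G,X)$ in Wasserman's classification and each $i\in\{1,2\}$, whether some color is negative on $\gamma_i$.

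For the cuspidal varieties I would run through the list of cuspidal indecomposable rank-$2$ wonderful varieties in \cite{Wa96}, where the colors, the functionals $\rho_{G,X}$ and the two spherical roots are tabulated; checking the two sign conditions is then a finite inspection, yielding exactly $\mathbf 1_{\mathit{rk}=2},\dots,\mathbf 4_{\mathit{rk}=2}$ as the systems with a non-fixed divisor, whose invariants are copied from \cite{Wa96}. What matters for the next step is to record, in each of these four cases, \emph{which} divisor is non-fixed: in all of them it is the one attached to the spherical root of largest support (up to relabelling, the ``long'' root $\gamma_2$), the other divisor being fixed. Concretely, in $\mathbf 2_{\mathit{rk}=2},\mathbf 3_{\mathit{rk}=2},\mathbf 4_{\mathit{rk}=2}$ the support of the non-fixed $\gamma_2$ is all of $S$, whereas in $\mathbf 1_{\mathit{rk}=2}$ it is $\supp\gamma_2=\{\alpha_1',\dots,\alpha_n'\}$, a \emph{proper} subset of $S=\{\alpha_1\}\cup\{\alpha_1',\dots,\alpha_n'\}$ missing only the $\mathsf{PSL}_2$-root $\alpha_1$; this asymmetry is what makes $\mathbf 1_{\mathit{rk}=2}$ behave differently under parabolic induction.

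For the non-cuspidal part, write $(G,X)$ as a parabolic induction of a cuspidal $(L,Y)$. Since parabolic induction preserves the rank and a cuspidal product has only trivial rank-$0$ factors, $(L,Y)$ is a product of positive-rank cuspidal indecomposable factors of total rank $2$, hence either a product of two cuspidal rank-$1$ varieties or a single cuspidal indecomposable rank-$2$ variety (up to central tori). The spherical system of $(G,X)$ equals that of $(L,Y)$, and $\Delta(G,X)$ consists of the colors of $(L,Y)$ together with one extra color $D_\alpha$, with $\rho_{G,X}(D_\alpha)=\alpha^\vee$, for each simple root $\alpha\in S\setminus S_L$; by (the proof of) Lemma \ref{lemma:noncuspfixes}, $\langle D_\alpha,\gamma_i\rangle\leq 0$ always, strictly so once $\alpha$ is non-orthogonal to $\supp\gamma_i$. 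Hence a divisor $X^{(i)}$ of $(G,X)$ is non-fixed exactly when no color of $(L,Y)$ is negative on $\gamma_i$ \emph{and} no extra simple root meets $\supp\gamma_i$. Using the cuspidal analysis: if $(L,Y)$ is a product of two rank-$1$ varieties, or a cuspidal rank-$2$ variety with $L=\Autz(Y)$, or one of $\mathbf 2_{\mathit{rk}=2},\mathbf 3_{\mathit{rk}=2},\mathbf 4_{\mathit{rk}=2}$, then indecomposability of $(G,X)$ forces some extra root to meet the (full) support of every non-fixed divisor of $(L,Y)$, so all divisors of $(G,X)$ become fixed and $G=\Autz(X)$. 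The only remaining possibility is $(L,Y)=\mathbf 1_{\mathit{rk}=2}$ with $L=\mathsf{PSL}_2\times\mathsf{PSp}_{2n}$: its non-fixed divisor is the $\gamma_2$-divisor with $\supp\gamma_2=\{\alpha_1',\dots,\alpha_n'\}$, and its image in $(G,X)$ stays non-fixed exactly when no simple root of $S\setminus S_L$ touches the $\mathsf C_n$-diagram, i.e.\ when $\mathsf{PSp}_{2n}$ is a direct factor $G=G_1\times\mathsf{PSp}_{2n}$; indecomposability then forces $G_1\supsetneq\mathsf{PSL}_2$, and conversely any such $(G,X)$ is indecomposable (since $\gamma_1=\alpha_1+\alpha_1'$ links $\alpha_1$ to $\mathsf{PSp}_{2n}$) with $X^{(2)}$ genuinely non-fixed. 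This is precisely the family $\mathbf 5_{\mathit{rk}=2}$.

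The main obstacle is the bookkeeping in the last paragraph, centred on $\mathbf 1_{\mathit{rk}=2}$: one must verify precisely that its $\gamma_1$-divisor is fixed while its $\gamma_2$-divisor is not, and then check, enlargement by enlargement, that the $\gamma_2$-divisor becomes fixed in every admissible parabolic induction except those keeping $\mathsf{PSp}_{2n}$ a direct factor — in particular handling the ``long-end'' enlargements of $\mathsf C_n$ (producing $\mathsf F_4$ for $n=3$ and $\mathsf B$-type groups for small $n$), where a single extra root can fix one boundary divisor without fixing the other, and the coincidences at $n=2$. The cuspidal part is, by contrast, a careful reading of the tables in \cite{Wa96} together with the reduction of the first paragraph.
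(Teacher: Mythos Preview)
Your proof is correct and follows essentially the same route as the paper: reduce to the fixed/non-fixed dichotomy via Theorem \ref{thm:brion}(3) and Proposition \ref{prop:bordo}, inspect Wasserman's tables for the cuspidal cases, and then for the non-cuspidal case use that a boundary divisor stays non-fixed under parabolic induction precisely when no added simple root touches the support of the corresponding spherical root (Lemma \ref{lemma:noncuspfixes}), which singles out $\mathbf 1_{\mathit{rk}=2}$ as the only cuspidal piece that can survive. Your final ``main obstacle'' paragraph is unnecessary: the criterion ``no extra simple root meets $\supp\gamma_2=\{\alpha_1',\dots,\alpha_n'\}$'' is equivalent on the nose to ``$\mathsf{PSp}_{2n}$ is a direct factor of $G$'', so there is no need to enumerate long-end enlargements of $\mathsf C_n$ or treat $n=2,3$ separately---your own argument in the preceding paragraph already handles this uniformly, just as the paper does.
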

\begin{proof}
Let us begin with the cuspidal case. Varieties $\mathbf{1}_{\mathit{rk}=2}$, $\mathbf{2}_{\mathit{rk}=2}$, $\mathbf{3}_{\mathit{rk}=2}$, $\mathbf{4}_{\mathit{rk}=2}$ are exactly the indecomposable cuspidal rank $2$ varieties where some border divisor is not fixed, following the tables in \cite{Wa96}. All other varieties satisfy $\partial(G,X)=\partial(\Autz(X),X)$ thanks to theorem \ref{thm:brion}, and $\Autz(X)=G$ thanks to proposition \ref{prop:bordo}.

In the non-cuspidal case, let $(G,X)$ be a non-cuspidal indecomposable rank $2$ variety, obtained by parabolic induction from a cuspidal one, say $(L,Y)$. We can always consider $(L,Y)$ as a product, of possibly only one factor. Anyway, it has rank $2$, like $(G,X)$.

This means that the cuspidal indecomposable factors of $(L,Y)$ of rank $>0$ are either one of rank $2$, or two of rank $1$. In the latter case the situation is similar to the second part of the proof of proposition \ref{prop:rank1} (the ``non-cuspidal part''): we discover in the same way that one of the two border divisors of $(G,X)$ can be non-fixed only if $(G,X)$ itself is a product where one of the factors has rank $1$ and non-fixed border divisor. This is absurd, since we are supposing $(G,X)$ indecomposable.

We are left with the case where $(L,Y)$ has a rank $2$ cuspidal indecomposable factor $(L_1,Y_1)$. It is evident that if the border divisors of $(L_1,Y_1)$ are all fixed, then the same happens for $(G,X)$, so we may suppose that $(L_1,Y_1)$ has at least one non-fixed divisor. Then $(L_1,Y_1)$ is equal to $\mathbf{1}_{\mathit{rk}=2}$, $\mathbf{2}_{\mathit{rk}=2}$, $\mathbf{3}_{\mathit{rk}=2}$ or $\mathbf{4}_{\mathit{rk}=2}$.

For $\mathbf{2}_{\mathit{rk}=2}$, $\mathbf{3}_{\mathit{rk}=2}$ and $\mathbf{4}_{\mathit{rk}=2}$, the group $L_1$ is simple and the spherical root corresponding to the non-fixed divisor has support equal to the whole set of simple roots of the group. From lemma \ref{lemma:noncuspfixes} we deduce that this divisor is non-fixed in $(G,X)$ only if the factor $L_1$ of $L$ is also a factor of $G$. So we have a decomposition $G=L_1\times G_1$, where no spherical root of $X$ has support on $G_1$: lemma \ref{lemma:supportprod} implies that $(G,X)$ is either a product or equal to $(L_1,Y_1)$, and this is absurd, $(G,X)$ being non-cuspidal and indecomposable.

Therefore the only situation where $(G,X)$ has a non-fixed divisor occurs if $(L_1,Y_1)=\mathbf{1}_{\mathit{rk}=2}$, where the non-fixed divisor corresponds to a spherical root whose support is the set of simple roots of $\mathsf{Sp}_{2n}$. This divisor remains non-fixed in $(G,X)$ if and only if the factor $\mathsf{Sp}_{2n}$ of $L_1$ is also a factor of $G$, and this completes the proof.
\end{proof}
Let us discuss the cuspidal varieties of the proposition above. In each case, only one border divisor is fixed: it is a rank $1$ wonderful variety which is homogeneous under its automorphism group, and which coincides with the closed $\Autz(X)$-orbit on $X$. Moreover, it always appears in full details in subsection \ref{ssect:rank1} as case $9\mathsf B$ or $9\mathsf C$. 

Using the knowledge of this divisor and its invariants under the action of its automorphism group, it is immediate to deduce the automorphism groups and the relative invariants of our rank $2$ varieties. An explicit geometrical description is also possible for the first three varieties.
\begin{enumerate}
\item[$\mathbf{1}_{\mathit{rk}=2}$] $\Autz(X)= \mathsf{PSL_2}\times\mathsf{PSL_{2n}}$, and $X$ is the following variety:
\[
X = \left\{ (E,M)  
\left|
\begin{array}{ll}
E \in \Gr(n, \C^{2n}), \\
M \in \p(\Hom(\C^2,E))
\end{array} \right.
\right\}
\]
where $\Hom(\C^2,E)$ is the space of linear homomorphisms between a fixed $\C^2$ and the $2$-dimensional space $E$. The action of $G$ and of $\Autz(X)$ are defined in the same way. The factors $\mathsf{PSp_{2n}}\subset G$ and $\mathsf{PSL_{2n}}\subset \Autz(X)$ act in the usual way on the Grassmannian; an element $(x,y)$ (where $x\in\mathsf{SL_2}$ and $y\in\mathsf{Sp_{2n}}$ or $\mathsf{SL_{2n}}$) act on the coordinate ``$M$'' as:
\[
\begin{array}{ccc}
\p(\Hom(\C^2,E))   & \to      & \p(\Hom(\C^2, yE)) \\[0pt]
M=[f]                & \mapsto  & [y \circ f \circ x^{-1}].
\end{array}
\]
The invariants under the action of $\Autz(X)$ are:
\begin{eqnarray*}
S^p(\Autz(X),X) \!\!\!&=& \!\!\!\{  \alpha'_3,\ldots,\alpha'_{2n-1} \}\\
\Sigma(\Autz(X),X) \!\!\!&=& \!\!\!\{ \gamma_1 = \alpha_1+\alpha'_1 \} \\
\A(\Autz(X),X) \!\!\! &=& \!\!\!\emptyset
\end{eqnarray*}
\item[$\mathbf{2}_{\mathit{rk}=2}$] $\Autz(X)= \mathsf{PSL_{2n}}$, $X=\Bl_{\Diag(\p^{2n-1})}((\p^{2n-1}\times\p^{2n-1})/\sim)$, where $(x,y)\sim(y,x)$,
\begin{eqnarray*}
S^p(\Autz(X),X) \!\!\!&=& \!\!\!\{ \alpha_3,\ldots,\alpha_{2n-1} \}\\
\Sigma(\Autz(X),X) \!\!\!&=& \!\!\!\{ \gamma_1=2\alpha_1 \} \\
\A(\Autz(X),X) \!\!\! &=& \!\!\! \emptyset 
\end{eqnarray*}
\item[$\mathbf{3}_{\mathit{rk}=2}$] $\Autz(X)= \mathsf{PSL_{2n}}$, $X=\Bl_{\Diag(\p^{2n-1})}(\p^{2n-1}\times\p^{2n-1})$,
\begin{eqnarray*}
S^p(\Autz(X),X) \!\!\!&=& \!\!\!\{ \alpha_3,\ldots,\alpha_{2n-1} \}\\
\Sigma(\Autz(X),X) \!\!\!&=& \!\!\!\{ \gamma_1=\alpha_1 \} \\
\A(\Autz(X),X) \!\!\! &=& \!\!\! \{ D_1^+, D_1^- \}, \;\;\; \langle D_1^+, \gamma_1\rangle = \langle D_1^-,\gamma_1\rangle=1
\end{eqnarray*}
\item[$\mathbf{4}_{\mathit{rk}=2}$] $\Autz(X)= \mathsf{PSO}_{10}$; here $X$ under the action of $\Autz(X)$ is a parabolic induction from $(L,Y)$, where $L=\mathsf{PSO_{8}}$ and $(L,Y)$ is the rank $1$ variety $\p^7$ (cf. rank $1$ case 6$\mathsf D$). The parabolic induction is given identifying $L$ with a Levi part of the parabolic subgroup of $\mathsf{PSO_{10}}$ associated to the simple roots $\alpha_2, \alpha_3, \alpha_4, \alpha_5$. The invariants under the action of $\Autz(X)$ are:
\begin{eqnarray*}
S^p(\Autz(X),X) \!\!\!&=& \!\!\!\{  \alpha_2,\alpha_3,\alpha_4 \}\\
\Sigma(\Autz(X),X) \!\!\!&=& \!\!\!\{ \gamma_1=\alpha_2+2\alpha_3+\alpha_4+2\alpha_5 \} \\
\A(\Autz(X),X) \!\!\! &=& \!\!\! \emptyset
\end{eqnarray*}
\end{enumerate}
In the end, let us consider the non-cuspidal exception $\mathbf{5}_{\mathit{rk}=2}$. Here our $(G,X)$ is a parabolic induction from $(L,Y)$ where $(L,Y)$ is the first cuspidal case, and $G=G_1\times \mathsf{PSp}_{2n}$. Again, it is easy to prove that $\Autz(X)=G_1\times \mathsf{PSL}_{2n}$ and the invariants behave, {\em mutatis mutandis}, in the same way as for $\mathbf{1}_{\mathit{rk}=2}$.

\subsection{Higher rank} \label{ssect:highrank}
\begin{proposition} \label{prop:highrank}
Let $X$ be an indecomposable $G$-wonderful variety of rank at least $3$, and let us suppose that $G$ acts faithfully. Then $G=\Autz(X)$ except for any variety $(G,X)$ such that $G$ has a factor $\mathsf{PSp}_{2n}$ with associated simple roots $\alpha_1,\ldots,\alpha_n$, and the invariants of $X$ satisfy:
\begin{eqnarray*}
\Sigma(G,X) \!\!\!& \ni & \!\!\!\alpha_1+2\alpha_2+\ldots+2\alpha_{n-1}+\alpha_n, \\
S^p(G,X) \!\!\!& \not\ni & \!\!\!\alpha_1.
\end{eqnarray*}
\end{proposition}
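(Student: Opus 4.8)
The plan is to reduce the whole statement, exactly as in the lower rank cases, to the question of whether $X$ has a \emph{non-fixed} boundary divisor, and then to locate such a divisor inside a rank $1$ or rank $2$ subvariety, where Propositions \ref{prop:rank1} and \ref{prop:rank2} apply. First I would set up the reduction. Since $X$ is indecomposable of rank $\geq 3\geq 1$, Theorem \ref{thm:brion}(3) together with Proposition \ref{prop:bordo} give that $\Autz(X)=G$ if and only if every boundary divisor of $(G,X)$ is fixed. I would then invoke the standard fact from Luna's theory that for a $G$-stable wonderful subvariety $Y=\bigcap_j X^{(i_j)}$ the colors of $Y$ are precisely the restrictions $D\cap Y$ of the colors $D$ of $X$, with $\rho_{G,Y}(D\cap Y)=\rho_{G,X}(D)|_{\Xi(G,Y)}$. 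As $\gamma_i\in\Xi(G,Y)$ whenever $\gamma_i\in\Sigma(G,Y)$, this shows that the boundary divisor of $X$ associated to $\gamma_i$ is fixed in $(G,X)$ if and only if the boundary divisor associated to $\gamma_i$ is fixed in $(G,X_{(i)})$, equivalently in $(G,X_{(ij)})$ for some (equivalently any) $j\neq i$. Hence $\Autz(X)\neq G$ exactly when some rank $1$ subvariety $X_{(i)}$, equivalently some rank $2$ subvariety $X_{(ij)}$, has a non-fixed boundary divisor.

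For the ``if'' direction I would argue directly. Suppose $G=\mathsf{PSp}_{2n}\times G'$ with $\{\alpha_1,\ldots,\alpha_n\}$ the simple roots of the $\mathsf{PSp}_{2n}$-factor, and $\gamma:=\alpha_1+2\alpha_2+\cdots+2\alpha_{n-1}+\alpha_n\in\Sigma(G,X)$ with $\alpha_1\notin S^p(G,X)$. Let $X_{(i)}$ be the rank $1$ subvariety with $\Sigma(G,X_{(i)})=\{\gamma\}$. Since $\supp\gamma=\{\alpha_1,\ldots,\alpha_n\}$ contains no simple root of $G'$, Lemma \ref{lemma:supportprod} gives $X_{(i)}\cong Z'\times Y_1$ with $Z'$ a rank $0$ wonderful $G'$-variety and $Y_1$ a rank $1$ wonderful $\mathsf{PSp}_{2n}$-variety with $\Sigma=\{\gamma\}$. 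Because $\gamma$ is not simple, $\mathcal A(G,X_{(i)})=\emptyset$, hence $\mathcal A(\mathsf{PSp}_{2n},Y_1)=\emptyset$; with $\alpha_1\notin S^p$ the spherical-system axioms pin down $S^p$ on $\supp\gamma$, so by Theorem \ref{thm:uni} $Y_1$ is case $9\mathsf{C}$ of subsection \ref{ssect:rank1}. That variety is homogeneous under its automorphism group, so its boundary divisor is not fixed; hence $X_{(i)}$ has a non-fixed boundary divisor and, by the reduction above, $\Autz(X)\neq G$.

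For the ``only if'' direction, suppose $\Autz(X)\neq G$ and pick $i$ with $X_{(i)}$ having a non-fixed boundary divisor; write $X_{(i)}\cong Z'\times Y_1$ with $Y_1$ the unique indecomposable factor of positive (hence $1$) rank, so $Y_1$ has a non-fixed boundary divisor. By Proposition \ref{prop:rank1} $Y_1$ is cuspidal, i.e.\ one of the cuspidal indecomposable rank $1$ varieties of \cite{Wa96} with non-fixed boundary. It remains to show $Y_1=9\mathsf{C}$ and that the relevant $\mathsf{PSp}_{2n}$ is a genuine factor of $G$. I would combine three ingredients. First, if $\supp\gamma_i$ were not a full connected component of the Dynkin diagram of $G$, there would be a simple root $\beta\notin\supp\gamma_i$ adjacent to it, and Lemmas \ref{lemma:moved} and \ref{lemma:noncuspfixes} produce a color $D$ with $\langle D,\gamma_i\rangle<0$ unless $\beta$ enters the support of some other spherical root, in which case a rank $2$ subvariety $X_{(ij)}$ falls under Proposition \ref{prop:rank2}. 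Second, in each exceptional rank $2$ case of Proposition \ref{prop:rank2} (cases $\mathbf{1}_{\mathit{rk}=2}$--$\mathbf{4}_{\mathit{rk}=2}$ and $\mathbf{5}_{\mathit{rk}=2}$) there is a $\mathsf{PSp}_{2m}$-factor carrying a spherical root of the stated shape. Third, the arguments already used in the proof of Proposition \ref{prop:bordo} show that the remaining ``sporadic'' small exceptions (the rank $1$ cases $10,11,13,14$ of \cite{Wa96}, and the $\mathsf{PSO}_9$-configuration of $\mathbf{4}_{\mathit{rk}=2}$) can only occur as product factors, hence cannot sit inside the indecomposable $X$ of rank $\geq 3$. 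This leaves $Y_1=9\mathsf{C}$ with its group a factor $\mathsf{PSp}_{2n}$ of $G$, and reading off the invariants of $9\mathsf{C}$ gives $\gamma_i=\alpha_1+2\alpha_2+\cdots+2\alpha_{n-1}+\alpha_n$ with $\alpha_1\notin S^p(G,X)$.

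The main obstacle is the third ingredient above: excluding, without using a classification of all higher rank wonderful varieties, the possibility that an exceptional rank $1$ or rank $2$ configuration other than the $\mathsf{PSp}_{2n}$ one survives inside an indecomposable variety of rank $\geq 3$ with its relevant boundary divisor still non-fixed. This is a finite but delicate check, resting entirely on Luna's combinatorial axioms, Lemmas \ref{lemma:moved} and \ref{lemma:noncuspfixes}, Proposition \ref{prop:bordo}, and the rank $1$ and rank $2$ classifications; the rank $0$ considerations of subsection \ref{ssect:rank0} (the $S^p$ of an exceptional rank $0$ variety omits only one simple root) will again be what prevents the closed orbit from absorbing such a configuration.
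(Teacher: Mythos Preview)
Your approach is essentially the paper's: reduce via Theorem~\ref{thm:brion} and Proposition~\ref{prop:bordo} to detecting a non-fixed boundary divisor, then locate it inside a low-rank subvariety and invoke the rank~$1$/rank~$2$ classifications. Your ``if'' direction is in fact more explicit than the paper's one-line appeal to the rank~$2$ tables.

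The difference is organizational, and the paper's route is cleaner for the ``only if'' direction. Rather than working at rank~$1$ and then conditionally climbing to rank~$2$, the paper uses indecomposability of $(G,X)$ once and for all: since $X$ is indecomposable, the rank~$1$ subvariety $X_{(1)}$ with non-fixed boundary always sits inside some rank~$2$ subvariety having an \emph{indecomposable} rank~$2$ factor $(G_1,Y)$, again with non-fixed boundary. Proposition~\ref{prop:rank2} then forces $(G_1,Y)$ into the list $\mathbf{1}_{\mathit{rk}=2}$--$\mathbf{5}_{\mathit{rk}=2}$, and a single lookup in Wasserman's tables shows that $\mathbf{2}_{\mathit{rk}=2}$ and $\mathbf{4}_{\mathit{rk}=2}$ never occur as (factors of) subvarieties of an indecomposable variety of higher rank. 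The remaining cases $\mathbf{1}_{\mathit{rk}=2}$, $\mathbf{3}_{\mathit{rk}=2}$, $\mathbf{5}_{\mathit{rk}=2}$ all carry the $\mathsf{PSp}_{2n}$-factor and the spherical root of the stated shape, so the conclusion follows immediately.

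Two small inaccuracies in your outline: your ingredient (b) is false as stated, since $\mathbf{4}_{\mathit{rk}=2}$ has $G=\mathsf{PSO}_9$ with no $\mathsf{PSp}$-factor (you patch this in (c), but the phrasing of (b) should exclude it); and the exclusion of $\mathbf{4}_{\mathit{rk}=2}$ does not come from Proposition~\ref{prop:bordo} but from Wasserman's rank~$2$ classification (the same classification that excludes $\mathbf{2}_{\mathit{rk}=2}$). Once you reorganize around the direct passage to an indecomposable rank~$2$ factor, your three ingredients collapse into the single Wasserman lookup, and the ``delicate check'' you flag as the main obstacle disappears.
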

Before proving the proposition, we remark that this kind of varieties appear also in rank $1$, namely case $9\mathsf C$, and in rank $2$, namely varieties $\mathbf{1}_{\mathit{rk}=2}$, $\mathbf{2}_{\mathit{rk}=2}$, $\mathbf{3}_{\mathit{rk}=2}$, $\mathbf{5}_{\mathit{rk}=2}$.
\begin{proof}
If $G\neq \Autz(X)$, then some of the border prime divisors of $(G,X)$ is not fixed. Hence at least one of the rank $1$ wonderful $G$-subvarieties must have a non fixed border prime divisor: call it $(G,X_{(1)})$. Since $(G,X)$ is indecomposable, definition \ref{def:prod} implies directly that $(G,X_{(1)})$ is contained in some rank $2$ wonderful $G$-subvariety having an indecomposable rank $2$ factor.

Call this factor $(G_1,Y)$: it has again a non-fixed border prime divisor, so it must appear in proposition \ref{prop:rank2}. If $(G_1,Y)$ is non-cuspidal, then $(G,X)$ falls into the family $\mathbf{5}_{\mathit{rk}=2}$, therefore our conditions are satisfied.

Otherwise, we have to look at the classification of rank $2$ wonderful varieties in \cite{Wa96}. We find out that varieties $\mathbf{2}_{\mathit{rk}=2}$ and $\mathbf{4}_{\mathit{rk}=2}$ cannot be subvarieties (nor factors of subvarieties) of any indecomposable bigger variety, so $(G_1,Y)$ is either $\mathbf{1}_{\mathit{rk}=2}$ or $\mathbf{3}_{\mathit{rk}=2}$: again, our conditions for $(G,X)$ are satisfied.

On the other hand, the classification in rank $2$ shows that if a variety satisfy our conditions, then the divisor associated to the spherical root $\alpha_1+2\alpha_2+\ldots+2\alpha_{n-1}+\alpha_n$ cannot be fixed, and this finishes the proof.
\end{proof}

We can describe a bit more precisely these exceptions in rank $> 2$: the classification in rank $2$ can be used easily to understand what are all wonderful varieties satisfying the conditions in proposition \ref{prop:highrank}. They might be regarded as composed by three families:
\begin{enumerate}
\item[$\mathbf{1}_{\mathit{rk}>2}$] any variety of rank $>2$ satisfying the conditions of $\mathbf{5}_{\mathit{rk}=2}$;
\item[$\mathbf{2}_{\mathit{rk}>2}$] any variety $(G,X)$ such that: $G$ has a simple factor $\mathsf{PSp}_{2n}$ with associated simple roots $\alpha_1,\ldots,\alpha_n$, and the invariants of $X$ satisfy:
\begin{eqnarray*}
\Sigma(G,X) \!\!\!& \supseteq &\!\!\! \left\{
\begin{array}{l}
\gamma_1=\alpha_1+2\alpha_2+\ldots+2\alpha_{n-1}+\alpha_n,\\
\gamma_2=\alpha_1,\\
\gamma_3=\alpha
\end{array}
\right\} \\
\A(G,X) \!\!\!& \ni & \!\!\!D \textrm{, such that $D$ is moved by both $\alpha_1$ and $\alpha$ }, 
\end{eqnarray*}
for $\alpha$ some simple root of $G$ not among $\alpha_1,\ldots,\alpha_n$.
\item[$\mathbf{3}_{\mathit{rk}>2}$] the rank $3$ variety $(G,X)$ where $G=\mathsf{PSp}_{2n}\times \mathsf{PSp}_{2m}$ and $X$ is uniquely determined by:
\[
\Sigma(G,X)= \left\{     
\begin{array}{l}
\gamma_1=\alpha_1+2\alpha_2+\ldots+2\alpha_{n_1-1}+\alpha_{n}, \\
\gamma_2=\alpha'_1+2\alpha'_2+\ldots+2\alpha'_{n_2-1}+\alpha'_{m}, \\
\gamma_3=\alpha_1+ \alpha'_1
\end{array} \right\}. 
\]
In this case, the variety $X$ has a decription similar to $\mathbf{1}_{\mathit{rk}=2}$, cf. subsection \ref{ssect:rank2}:
\[
X = \left\{ (E_1,E_2,M)  
\left|
\begin{array}{ll}
E_1 \in \Gr(2, \C^{2n}), \\
E_2 \in \Gr(2, \C^{2m}), \\ 
M \in \p(\Hom(E_1,E_2))
\end{array} \right.
\right\}
\]
\end{enumerate}

To conclude this section, we describe the varieties $(\Autz(X),X)$ for all these higher rank exceptions. Let $(G,X)$ be one of them: the group $G$ has one or more factors isomorphic to $\mathsf{PSp}$,
\[
G=G_1\times \mathsf{PSp}_{2n_1} \times \mathsf{PSp}_{2n_2}\times \cdots\times \mathsf{PSp}_{2n_k},
\]
where each $\mathsf{PSp}_{2n_i}$ (with simple roots $\alpha_1^i,\ldots,\alpha_{n_i}^i$) gives the prescribed spherical root $\alpha_1^i+2\alpha_2^i+\ldots+2\alpha_{n_i-1}^i+\alpha_{n_i}^i$ . Like in the previous subsection, it is straightforward to show that $\Autz(X)$ will differ from $G$ only in the factors $\mathsf{PSp}_{2n_i}$ which turn into $\mathsf{PSL}_{2n_i}$, and that the spherical systems of $X$ with respect to $G$ and to $\Autz(X)$ will differ only accordingly to this change.

Precisely: let us call $\beta_1^i,\ldots,\beta_{2n_i-1}^i$ the simple roots of $\mathsf{PSL}_{2n_i}$. Then, the spherical root $\alpha_1^i+2\alpha_2^i+\ldots+2\alpha_{n_i-1}^i+\alpha_{n_i}$ desappears in $\Sigma(\Autz(X),X)$; any other spherical root involving $\alpha_1^i$ in the support (such as for example $\alpha_1^i$ itself) remains the same, with $\beta_1^i$ taking the place of $\alpha_1^i$. All other spherical roots of $(G,X)$ appear unchanged in $(\Autz(X),X)$, and the sets $\A(G,X)$, $\A(\Autz(X),X)$ are exactly the same.

The set $S^p(\Autz(X),X)$ concides with $S^p(G,X)$ in what concerns the simple roots of $G_1$. Finally, $S^p(\Autz(X),X)$ ``behaves'' like $S^p(G,X)$ on the $\mathsf{PSp}$ factors of the group, in the sense that $S^p(G,X)$ contains all simple roots of $\mathsf{PSp}_{2n_i}$ except $\alpha_1^i$ and $\alpha_2^i$, and correspondingly $S^p(\Autz(X),X)$ contains all roots of $\mathsf{PSL}_{2n_i}$ except $\beta_1^i$ and $\beta_2^i$.

\section{Morphisms} \label{sect:morphisms}
\subsection{The set of all colors}
It possible to recover all colors and the values of the associated functionals, starting only from the spherical system of a wonderful variety $(G,X)$. Following \cite{Lu01} and its notations, we can identify each color in $\Delta(G,X)\setminus \A(G,X)$ with the simple roots it is moved by. This gives a disjoint union:
\[
\Delta(G,X) = \A(G,X) \cup \Delta^{a'}(G,X) \cup \Delta^b(G,X)
\]
The set $\Delta^{a'}(G,X)$ is in bijection with the set of simple roots $\alpha$ such that $2\alpha$ is a spherical root. For such a color $D$, we have $\rho_{G,X}(D)=\frac12 \alpha^\vee|_{\Xi(G,X)}$. The set $\Delta^b(G,X)$ is in bijection with the following set:
\[
\left.\left(S \setminus \left(\Sigma(G,X)\cup\frac12\Sigma(G,X)\cup S^p(G,X) \right) \right)\right/\sim
\]
where $\alpha\sim\beta$ if $\alpha=\beta$, or if $\alpha\perp\beta$ and $\alpha+\beta\in\Sigma(G,X)$ (or $\frac12(\alpha+\beta)\in\Sigma(G,X)$). For such a color $D$, the associated functional $\rho_{G,X}(D)$ is equal to $\alpha^\vee$, for $\alpha$ any representative of the $\sim$-equivalence class associated to $D$.

\subsection{A special class of smooth $G$-equivariant morphisms}
The theory of spherical varieties is used in \cite{Lu01} to study surjective $G$-equivariant morphisms with connected fibers between wonderful $G$-varieties.

Here it is enough to recall a particular case. Let $\Delta'$ be a subset of $\Delta(G,X)$, such that:
\[
\forall D\in\Delta',\forall\gamma\in\Sigma(G,X)\colon\;\; \langle D,\gamma \rangle \geq 0.
\]
Then there exist a unique (up to $G$-isomorphism) wonderful $G$-variety $X_{\Delta'}$, and a unique surjective $G$-equivariant map $f_{\Delta'}\colon X\to X_{\Delta'}$ with connected fibers, such that:
\[
\Delta' = \left\{ D\in\Delta(G,X) \;|\; f_{\Delta'}(D)=X_{\Delta'} \right\}.
\]
The morphism $f_{\Delta'}$ is smooth. The invariants of $X_{\Delta'}$ are the same of $X$, except for the following changes:
\begin{enumerate}
\item all spherical roots, where some color of $\Delta'$ was positive, disappear;
\item all colors of $\Delta'$ disappear; this may cause obvious changes in the sets $\A$ and $S^p$.
\end{enumerate}

A particular case occurs when $\Delta'$ contains only one element $\delta$: let us call {\em positive} such a color.

In \cite{Lu01} the induced maps $f_{\{\delta\}}$ were studied in details in the case where $\delta\in\A(G,X)$ and where $G$ has only components of type $\mathsf A$: it was called a {\em projective fibration}. Without these restrictions on $\delta$ and $G$ the analysis of {\em loc.cit.} cannot be always applied.

If we restrict ourselves to indecomposable varieties of rank at least $2$, we can reformulate our main theorem \ref{thm:main} in the following way: 
\begin{theorem}\label{thm:main2}
Let $(G,X)$ be an indecomposable wonderful variety of rank at least $2$. Then $\Autz(X)\neq G$ if and only if there exist a positive color $\delta\in\Delta(G,X)$ such that the variety $(G,X_{\{\delta\}})$ has a rank $1$ factor which is homogeneous under its full automorphism group.
\end{theorem}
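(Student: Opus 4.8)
The plan is to prove the biconditional by analyzing both directions through the machinery already developed in sections 3 and 4. The key technical point is to relate the operation $X \mapsto X_{\{\delta\}}$, which removes a single positive color $\delta$ (and any spherical roots on which $\delta$ is strictly positive), with the classification of exceptional varieties in Theorem \ref{thm:main}.

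For the ``only if'' direction, suppose $\Autz(X)\neq G$ with $(G,X)$ indecomposable of rank $\geq 2$. Then $(G,X)$ appears in Propositions \ref{prop:rank2} or \ref{prop:highrank}. I would go through the list: in each case the group $G$ has a factor $\mathsf{PSp}_{2n}$ (or, for $\mathbf{4}_{\mathit{rk}=2}$, $\mathsf{PSO}_9$) such that there is a color $D$ with $\rho_{G,X}(D)=\alpha_1^\vee|_{\Xi}$ whose associated simple root $\alpha_1$ is non-orthogonal to $\supp\gamma$ for some spherical root $\gamma$ of ``Grassmannian type'' (as in $9\mathsf B$, $9\mathsf C$ of subsection \ref{ssect:rank1}). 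The relevant observation is that this color $D$ is non-negative on all spherical roots except the ``big'' one $\gamma=\alpha_1+2\alpha_2+\cdots+2\alpha_{n-1}+\alpha_n$ (or the analogue), on which it is strictly positive by the support argument of Lemma \ref{lemma:noncuspfixes}; so $D$ is \emph{not} positive in the sense above, but in cases $\mathbf{1}_{\mathit{rk}=2}$, $\mathbf{3}_{\mathit{rk}=2}$ etc. there is instead a color $\delta$ (coming from the $\Delta^{a'}$ part, or from $\A$) which \emph{is} positive, and removing it yields a variety whose surviving invariants include exactly the rank $1$ piece $9\mathsf B$ or $9\mathsf C$ as an indecomposable factor. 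So I would, case by case, exhibit $\delta$ and compute $X_{\{\delta\}}$ using the two bullet-point rules from subsection 4.2, checking that the result has the claimed rank $1$ homogeneous-under-$\Autz$ factor (which is precisely $9\mathsf B$ or $9\mathsf C$, already shown in subsection \ref{ssect:rank1} to be homogeneous under $\Autz$).

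For the ``if'' direction, suppose such a positive $\delta$ exists with $(G,X_{\{\delta\}})$ having a rank $1$ factor $(G_1, W)$ homogeneous under $\Autz(W)$. By the rank $1$ classification (Proposition \ref{prop:rank1} and the discussion following it), $(G_1,W)$ must be cuspidal and is one of the varieties listed in \cite{Ah83} or cases $9\mathsf B$, $9\mathsf C$; in particular $\Autz(W)\supsetneq G_1$. The point is to pull this back: since $f_{\{\delta\}}\colon X\to X_{\{\delta\}}$ is smooth with connected fibers and the spherical roots killed are exactly those on which $\delta$ is positive, the spherical root $\gamma$ of the rank $1$ factor $W$ survives in $\Sigma(G,X)$, so $(G,X)$ itself has a rank $1$ wonderful $G$-subvariety $X_{(i)}$ mapping onto (a variety related to) $W$. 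I would then argue that the corresponding boundary divisor $X^{(i)}$ of $(G,X)$ is \emph{not} fixed: the extra automorphisms of $W$ force a color to be strictly negative on $\gamma$ — concretely, tracking through the $\Delta^b$/$\Delta^{a'}$ description and the structure of $9\mathsf B$, $9\mathsf C$ one sees $\langle D,\gamma\rangle < 0$ for the color $D$ with $\rho_X(D) = \alpha_1^\vee$. By Theorem \ref{thm:brion}(3), $X^{(i)}$ is then a fixed divisor only under $G$, i.e.\ not a boundary divisor of $\Autz(X)$, so $\partial(G,X)\neq\partial(\Autz(X),X)$, and hence $G\neq\Autz(X)$.

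The main obstacle I expect is the bookkeeping in the ``only if'' direction: one must verify for \emph{every} exceptional variety in the (potentially infinite, $n$-dependent) families of Propositions \ref{prop:rank2} and \ref{prop:highrank} that a single positive color $\delta$ exists whose removal isolates a rank $1$ homogeneous factor — in particular handling the higher-rank families $\mathbf{1}_{\mathit{rk}>2}$, $\mathbf{2}_{\mathit{rk}>2}$, $\mathbf{3}_{\mathit{rk}>2}$ where there may be several $\mathsf{PSp}$ factors and one must choose $\delta$ attached to just one of them. The saving grace is that the rank $2$ classification already tells us precisely which spherical roots and colors occur, and the operation $X\mapsto X_{\{\delta\}}$ only ever deletes data, so the verification reduces to inspecting the explicit invariant lists printed in subsections \ref{ssect:rank2} and \ref{ssect:highrank} rather than any new geometry.
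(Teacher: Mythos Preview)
Your overall plan for the ``only if'' direction --- go through the exceptional list from Propositions \ref{prop:rank2} and \ref{prop:highrank} and exhibit a suitable positive color $\delta$ in each case --- is exactly what the paper does, and your closing remark that this reduces to inspecting the printed invariant lists is correct. (Your paragraph about a color $D$ with $\rho_{G,X}(D)=\alpha_1^\vee$ being ``not positive in the sense above'' is muddled and not needed; just name the right $\delta$ case by case, as the paper does.)

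The ``if'' direction, however, has the logic reversed. You want to show that the boundary divisor $X^{(i)}$ associated to the spherical root $\gamma$ of the rank~$1$ factor is \emph{not} fixed, i.e.\ that $\langle D,\gamma\rangle\geq 0$ for \emph{every} color $D$ of $X$. Instead you try to produce a color with $\langle D,\gamma\rangle<0$; by Theorem~\ref{thm:brion}(3) that would make $X^{(i)}$ \emph{fixed}, hence a boundary divisor under $\Autz(X)$, which is the opposite of what you need. The correct (and shorter) argument is the one the paper gives: since the rank~$1$ factor of $X_{\{\delta\}}$ is homogeneous under its full automorphism group, its boundary divisor is not fixed, so every color of $X_{\{\delta\}}$ is non-negative on $\gamma$. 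But $\Delta(G,X)=\Delta(G,X_{\{\delta\}})\cup\{\delta\}$, and $\delta$ is a positive color, hence also non-negative on $\gamma$. Therefore no color of $X$ is negative on $\gamma$, the divisor $X^{(i)}$ is not fixed, and $\Autz(X)\neq G$ by Theorem~\ref{thm:brion}.
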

\begin{proof}
The ``if'' part is an easy consequence of theorem \ref{thm:brion}, so it is interesting to prove it independently from theorem \ref{thm:main}, while the ``only if'' part follows from a case-by-case check of the proof of theorem \ref{thm:main}. Let us begin with the former.

Consider the variety $X_{\{\delta\}}$. The rank $1$ factor in the hypothesis implies that one the border divisors of $X_{\{\delta\}}$ is not fixed, in other words all colors of $X_{\{\delta\}}$ take non-negative values on the associated spherical root.

This border divisor corresponds to a border divisor of $X$, say $X^{(1)}$. But we have $\Delta(G,X)=\Delta(G,X_{\{\delta\}})\cup\{\delta\}$ and $\delta$ is never negative on spherical roots of $X$, so $X^{(1)}$ is not fixed, and $\Autz(X)\neq G$.

Now we proceed to the ``only if'' part. Here we must use the proof of theorem \ref{thm:main} and find the required $\delta$ in each of the exceptions listed in subsections \ref{ssect:rank2} and \ref{ssect:highrank}:
\begin{itemize}
\item[$\mathbf{1}_{\mathit{rk}=2}$] $\delta\in\Delta^b(G,X)$, moved by $\alpha_1$ and $\alpha_1'$;
\item[$\mathbf{2}_{\mathit{rk}=2}$] $\delta\in\Delta^{a'}(G,X)$, moved by $\alpha_1$;
\item[$\mathbf{3}_{\mathit{rk}=2}$] $\delta=D_1^+\in\A(G,X)$;
\item[$\mathbf{4}_{\mathit{rk}=2}$] $\delta\in\Delta^b(G,X)$, moved by $\alpha_4$;
\item[$\mathbf{5}_{\mathit{rk}=2}$] as for $\mathbf{1}_{\mathit{rk}=2}$;
\item[$\mathbf{1}_{\mathit{rk}>2}$] as for $\mathbf{1}_{\mathit{rk}=2}$;
\item[$\mathbf{2}_{\mathit{rk}>2}$] $\delta=D\in\A(G,X)$;
\item[$\mathbf{3}_{\mathit{rk}>2}$] $\delta\in\Delta^b(G,X)$, moved by $\alpha_1$ and $\alpha_1'$.
\end{itemize}
Once $\delta$ is found, a straightforward combinatorial verification finishes the proof.
\end{proof}

In the hypotheses of the above theorem, let us consider the variety $(\Autz(X),X)$ and its color $\delta$. It is easy to check case-by-case that $\delta$ is again positive under the action of $\Autz(X)$, thus it defines a smooth $\Autz(X)$-equivariant map:
\[
\tilde f_{\{\delta\}}\colon X\to \tilde X_{\{\delta\}}
\]
with the same properties as $f_{\{\delta\}}$. Again a simple case-by-case proof shows that $\tilde X_{\{\delta\}}=X_{\{\delta\}}$, and that $\Autz(X_{\{\delta\}})=\Autz(X)$. From the uniqueness property of $f_{\{\delta\}}$ and the fact that $\Autz(X)\supset G$, it follows also that $\tilde f_{\{\delta\}}=f_{\{\delta\}}$.

We may rephrase these conclusions in the following way: the group $\Autz(X_{\{\delta\}})$ is bigger than $G$, and all the elements of $\Autz(X_{\{\delta\}})\setminus G$ lift to $X$ via $f_{\{\delta\}}$.

It would be very interesting to have a direct proof of this lifting property of $f_{\{\delta\}}$. However this property fails if we try to replace the hypothesis of theorem \ref{thm:main2} with the weaker condition ``there exist $\delta$ such that $X_{\{\delta\}}$ has an automorphism group bigger than $G$''.

A counterexample is given by the rank $1$ cuspidal case 15 of \cite{Wa96}, where $G=\mathsf G_2$ and:
\begin{eqnarray*}
S^p(G,X) \!\!\!&=& \!\!\! \emptyset \\
\Sigma(G,X) \!\!\!&=& \!\!\!\{ \gamma_1=\alpha_1+\alpha_2 \} \\
\A(G,X) \!\!\! &=& \!\!\! \emptyset
\end{eqnarray*}
Here there are two colors $D_1,D_2\in\Delta^b(G,X)$, moved resp. by $\alpha_1,\alpha_2$. We can choose $\delta=D_2$, so that $X_{\{\delta\}}=\mathbf{3}_{\mathit{rk}=0}$. Thus $\Autz(X_{\{\delta\}}) \neq G$, but proposition \ref{prop:rank1} says that $\Autz(X)=G$.

\end{document}